\newcommand\invisiblesection[1]{%
	\refstepcounter{section}%
	\addcontentsline{toc}{section}{\protect\numberline{\thesection}#1}%
	\sectionmark{#1}}
\newtheorem{theorem}{Theorem}[section]
\newtheorem{prop}[theorem]{Proposition}
\newtheorem{claim}{Claim}
\newtheorem{cor}[theorem]{Corollary}
\newtheorem{lemma}[theorem]{Lemma}
\theoremstyle{definition}
\newtheorem{definition}{Definition}
\newtheorem{question}[theorem]{Question}
\newcommand{\Hc}{\mathcal{H}}
\newcommand{\sm}{\frac{|\sigma(\mathcal{H})|}{|\mathcal{H}|}}
\newcommand{\R}{\mathbb{R}}
\author{Attila Jung}
\address{A.J.: Inst. of Mathematics, ELTE E\"otv\"os Lor\'and University, Budapest, Hungary}
\email{jungattila@gmail.com}
\title{Shadow of hypergraphs with bounded degree}
\keywords{shadow, hypergraph, Kruskal–Katona theorem}
\subjclass[2020]{05D05, 05C65, 05C35}
\begin{document}

\begin{abstract}
	We study the size of the shadow of $k$-uniform hypergraphs with bounded degree. Lower bounds on the ratio of the size of the shadow and the size of the hypergraph are given as a function of the degree bound and $k$. We show that cliques are extremal for a long range of degree bounds, but not for every bound. We give a general, but not sharp lower bound on the shadow ratio and show, that sometimes we can get extremal hypergraphs by deleting disjoint maximal matchings from a clique.
\end{abstract}

\maketitle

\section{Introduction}\label{sec:intro}

A $k$-uniform hypergraph on the underlying set $[n] = \{1, \ldots, n\}$ is a family $\Hc$ of $k$-element subsets of $[n]$. In notation: $\Hc \subset \binom{[n]}{k}$. We call the elements of $\Hc$ \textit{hyperedges}. The \textit{shadow} of $\Hc$ is the $(k-1)$-uniform hypergraph $\sigma(\Hc)$ whose elements are all the $(k-1)$ element subsets of $[n]$ contained in at least one hyperedge of $\Hc$: $\sigma(\Hc) = \{S \in \binom{[n]}{k-1}| \exists H \in \Hc: S \subset H\}$.

The size of $\Hc$ is just the number of hyperedges contained in $\Hc$, denoted by $|\Hc| = |\{H \in \Hc\}|$. The Kruskal-Katona Shadow Theorem gives a sharp lower bound on $|\sigma(\Hc)|$ as a function of $|\Hc|$ \cite{kruskal1963simplices, katona1968theorem}. But what can we say about the size of the shadow, if we know that the maximum degree of $\Hc$ is bounded? Let $\Delta(\Hc) = \max_{v \in [n]} |\{H \in \Hc: v \in H\}|$ be the \textit{maximum degree} of $\Hc$. A \textit{degree bound} of $\Hc$ is any real number $d$ such that $\Delta(\Hc) \leq d$. In this paper, we consider lower bounds on the shadow ratio $\sm$ of $k$-uniform hypergraphs with degree bound $d$ as a function of $k$ and $d$.

In general the shadow ratio can be as close to zero as we want for any fixed $k$, as the sequence of larger and larger cliques show: if $\Hc = \binom{[n]}{k}$, then $\sm = \frac{k}{n-k+1} \to 0$ as $n \to \infty$. But many hypergraph classes with bounded shadow ratio are known. A few examples are $l$-intersecting hypergraphs \cite{katona1964intersection}, hypergraphs with bounded VC-dimension \cite{frankl1984hypergraphs} and hypergraphs with a set of distinct representatives \cite{leck1993minimum}. In all of these cases the largest possible cliques are extremal. What can we say about the family of $k$-uniform hypergraphs with $\Delta(\Hc) \leq d$?

If $k = 2$, $\Hc$ is just a graph, and its shadow is the set of nonisolated vertices. A simple double counting of the vertex-edge incidences shows us the following.

\begin{prop}\label{prop:graphshadow}
	If $\Hc$ is a $2$-uniform hypergraph with degree bound $d\geq1$, then
	\begin{equation*}
		\sm \geq \frac{2}{d}.
	\end{equation*}
\end{prop}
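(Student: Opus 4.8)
The plan is to use the observation, already noted just before the statement, that for a $2$-uniform hypergraph the shadow $\sigma(\Hc)$ is exactly the set of non-isolated vertices, and then to count vertex--edge incidences in two different ways. First I would write $V_0 \subseteq [n]$ for the set of vertices lying in at least one edge, so that by definition of the shadow $|\sigma(\Hc)| = |V_0|$. I may assume $\Hc \neq \emptyset$ so that the ratio $\sm$ is defined, and the hypothesis $d \geq 1$ guarantees $d > 0$ so that we may later divide by $d$.

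Next I would consider the incidence count $I = |\{(v,H) : v \in [n],\ H \in \Hc,\ v \in H\}|$. Summing over the edges, each $H \in \Hc$ is a $2$-element set and hence contributes exactly two incidences, so $I = 2|\Hc|$. Summing instead over the vertices, $I = \sum_{v \in [n]} \deg(v) = \sum_{v \in V_0} \deg(v)$, where the last equality holds because isolated vertices contribute degree $0$ and so may be dropped from the sum.

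Finally I would apply the degree bound termwise: each $v \in V_0$ satisfies $\deg(v) \leq \Delta(\Hc) \leq d$, whence $\sum_{v \in V_0} \deg(v) \leq d\,|V_0| = d\,|\sigma(\Hc)|$. Combining the two evaluations of $I$ gives $2|\Hc| \leq d\,|\sigma(\Hc)|$, and dividing by $d\,|\Hc|$ yields the claimed inequality $\sm \geq \frac{2}{d}$.

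There is no real obstacle here; the argument is a one-line double count, and the only points deserving a word of care are the identification of $\sigma(\Hc)$ with $V_0$ and the harmless passage from all vertices to non-isolated vertices in the degree sum. It is worth recording that equality holds precisely when every non-isolated vertex has degree exactly $d$, i.e.\ for $d$-regular graphs; since cliques are regular, this foreshadows why cliques should be natural candidates for extremality in the higher-uniformity regimes studied later in the paper.
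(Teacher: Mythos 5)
Your proof is correct and is exactly the argument the paper intends: the paper introduces the proposition with the remark that ``a simple double counting of the vertex--edge incidences'' proves it, and your careful version of that double count (identifying $\sigma(\Hc)$ with the non-isolated vertices and bounding $\sum_{v}\deg(v)\leq d\,|\sigma(\Hc)|$) is precisely that. Your closing observation about equality for $d$-regular graphs also matches the paper's remark immediately after the proposition.
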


And, as any $d$-regular graph confirms, the bound on the shadow ratio is tight. Extremal hypergraphs, for which the stated inequalities are tight, are of interest in general. As we will see, cliques are extremal in terms of shadow ratio not just among hypergraphs with the same degree bounds as theirs. But what happens, if the degree bound is so low, it excludes every nontrivial clique? This case is rather different. We start with that by stating a stronger result than the one about the shadow ratios.

\begin{theorem}\label{thm:lowdegree}
	If $1 \leq d < k$, $\Hc$ is a $k$-uniform hypergraph with degree bound $d$ and $|\Hc| = qd + r$ with nonnegative integers $r < d$ and $q$, then
	\begin{equation*}
		|\sigma(\Hc)| \geq \left(qd + r\right)k - q\binom{d}{2} - \binom{r}{2}.
	\end{equation*}
\end{theorem}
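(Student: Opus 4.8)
The plan is to pass from the shadow to an \emph{overcount} and bound that instead. Writing $c(S)$ for the number of $H\in\Hc$ containing a given $S\in\binom{[n]}{k-1}$, a double count of incidences between hyperedges and their $(k-1)$-subsets gives $\sum_S c(S)=k|\Hc|$, while $|\sigma(\Hc)|=|\{S:c(S)\ge1\}|$. Hence
\[
	|\sigma(\Hc)| = k|\Hc| - \sum_{S}\bigl(c(S)-1\bigr),
\]
and since $k|\Hc|=(qd+r)k$, the theorem is \emph{equivalent} to the upper bound $\sum_S\bigl(c(S)-1\bigr)\le q\binom{d}{2}+\binom{r}{2}$ on the overcount.

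Next I would record the overcount on an auxiliary graph $\Gamma$ with vertex set $\Hc$, joining $H,H'$ whenever $|H\cap H'|=k-1$. Two such hyperedges meet in a \emph{unique} $(k-1)$-set, namely $H\cap H'$, so the hyperedges through a fixed $S$ with $c(S)\ge2$ form a clique in $\Gamma$, distinct $S$ give edge-disjoint cliques, and every contributing $S$ together with all hyperedges through it lies in a single connected component of $\Gamma$. Using $c(S)-1\le\binom{c(S)}{2}$ and summing over the faces inside one component $C$ counts each edge of $\Gamma|_C$ once, so the overcount contributed by $C$ is at most the number of its edges, hence at most $\binom{|C|}{2}$. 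If every component satisfied $|C|\le d$, then with $\sum_C|C|=|\Hc|=qd+r$ the convexity of $x\mapsto\binom{x}{2}$ would force $\sum_C\binom{|C|}{2}\le q\binom{d}{2}+\binom{r}{2}$, the maximum being attained by taking the parts as large as allowed, i.e.\ $q$ parts of size $d$ and one of size $r$. This is exactly the bound we need.

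The crux is therefore the \textbf{component lemma}: when $d<k$, every connected component $C$ of $\Gamma$ contains at most $d$ hyperedges. I would deduce this from the sharper claim that the hyperedges of $C$ share a common vertex, for if $v\in\bigcap_{H\in C}H$ then all $|C|$ hyperedges of $C$ lie among those containing $v$, so $|C|\le\deg(v)\le d$. To produce a common vertex I would combine two ingredients. First, an unconditional size estimate $\bigl|\bigcap_{H\in C}H\bigr|\ge k-|C|+1$, proved by induction on $|C|$: delete a leaf of a spanning tree of $\Gamma|_C$; the leaf shares a $(k-1)$-face with its neighbour, so intersecting with it removes at most one vertex from the running intersection. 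Second, suppose some component had empty intersection and take a minimal such $C$. By minimality, deleting any leaf leaves a nonempty intersection, which — having empty meet with the deleted leaf — must be the single dropped vertex, lying in all other hyperedges. Two distinct leaves then yield two vertices of codegree $|C|-1$ in $C$, so $|C|-1\le d$, i.e.\ $|C|\le d+1\le k$; but then the size estimate gives $\bigl|\bigcap_{H\in C}H\bigr|\ge k-|C|+1\ge k-d\ge1$, contradicting emptiness. Hence no component has empty intersection, and the component lemma follows.

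The main obstacle is precisely this common-vertex lemma. The naive leaf induction for nonemptiness stalls exactly when the running intersection collapses to a single vertex that the next leaf drops, and it is here that the hypothesis $d<k$ must be fed in: the minimality/two-leaves argument converts that degenerate case into a codegree-$(|C|-1)$ vertex, which the degree bound $d$ caps, after which the size estimate closes the loop. Once the lemma is in hand, the reduction, the edge-disjoint-clique bound $\binom{|C|}{2}$, and the convexity step are routine, and assembling them yields $|\sigma(\Hc)|\ge(qd+r)k-q\binom{d}{2}-\binom{r}{2}$.
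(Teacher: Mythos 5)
Your argument is correct and follows essentially the same route as the paper: the same auxiliary adjacency ($|H\cap H'|=k-1$), the same decomposition into connected components with the same key lemma that each component has at most $d$ hyperedges (resting on the identical ``the running intersection loses at most one vertex per added edge'' estimate), a per-component loss of at most $\binom{|C|}{2}$, and the same convexity finish; your overcount identity $|\sigma(\Hc)|=k|\Hc|-\sum_{S\in\sigma(\Hc)}(c(S)-1)$ is just a repackaging of the paper's greedy count of new shadow elements. The one point to tidy is that your minimal counterexample must range over connected \emph{subfamilies} with empty intersection rather than over components (components are maximal, so deleting a leaf does not yield another component); with that rewording the argument is airtight.
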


The tightness of the inequality is confirmed by the disjoint union of $q$ hypergraphs of size $d$, each on $k+1$ vertices and one hypergraph of size $r$, also on $k+1$ vertices. The corresponding result about shadow ratios is an easy corollary.

\begin{cor}\label{cor:lowdegreeratio}
	If $1 \leq d < k$ and $\Hc$ is a $k$-uniform hypergraph with degree bound $d$, then
	\begin{equation*}
		\sm \geq k - \frac{d-1}{2}.
	\end{equation*}
\end{cor}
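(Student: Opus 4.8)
The plan is to derive the corollary directly from Theorem~\ref{thm:lowdegree} by dividing its lower bound on $|\sigma(\Hc)|$ by $|\Hc| = qd + r$, where $0 \le r < d$ is the remainder. This immediately gives
$$\sm \ge \frac{(qd+r)k - q\binom{d}{2} - \binom{r}{2}}{qd+r} = k - \frac{q\binom{d}{2} + \binom{r}{2}}{qd+r},$$
so it suffices to show that the subtracted fraction is at most $\frac{d-1}{2}$, equivalently that $q\binom{d}{2} + \binom{r}{2} \le \frac{d-1}{2}(qd+r)$.

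The key observation is a cancellation. Substituting $\binom{d}{2} = \frac{d(d-1)}{2}$, the term $q\binom{d}{2} = \frac{qd(d-1)}{2}$ on the left exactly equals the contribution $\frac{d-1}{2}\cdot qd$ on the right, so the parts depending on $q$ disappear. The inequality then collapses to a statement about the remainder alone, namely $\binom{r}{2} \le \frac{(d-1)r}{2}$.

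This last step is routine: writing $\binom{r}{2} = \frac{r(r-1)}{2}$, the inequality is equivalent to $r(r-1) \le r(d-1)$, which holds because $r < d$ (and trivially when $r = 0$). There is no genuine obstacle here; the only point worth noting is that the degree-bound hypothesis $r < d$ is exactly what keeps the residual remainder term under control. Tracing the equality cases, equality in the corollary forces $r = 0$, i.e.\ $d \mid |\Hc|$, consistent with taking $q$ disjoint copies of a size-$d$ hypergraph in the extremal construction described after Theorem~\ref{thm:lowdegree}.
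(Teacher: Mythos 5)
Your proof is correct and is exactly the derivation the paper intends: the paper gives no explicit argument, stating only that the corollary follows easily from Theorem~\ref{thm:lowdegree}, and your computation (dividing by $|\Hc| = qd+r$, cancelling the $q$-terms, and bounding $\binom{r}{2} \leq \frac{(d-1)r}{2}$ via $r < d$) is the intended routine verification.
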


What happens if $d \geq k$? We first start with a general lower bound on the shadow ratio. It is not tight most of the times but gives a usable lower bound for every degree bound. If $k$ is a positive integer and $x \geq k$ is a real number, we can define the binomial coefficient $\binom{x}{k}$ by $\binom{x}{k} = \frac{x(x-1)\ldots(x-k+1)}{k!}$. Every positive integer can be expressed as such a binomial coefficient with a suitable $x$.

\begin{theorem}\label{thm:lovaszdegree}
	If $x \geq k-1 \geq 2$ is a real number and $\Hc$ is a $k$-uniform hypergraph with degree bound \[d \leq \binom{x}{k-1},\] then
	\begin{equation*}
		\sm \geq \frac{k}{x-k+2}.
	\end{equation*}
\end{theorem}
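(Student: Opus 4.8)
The plan is to localize the bound to the vertex links and then apply the Lovász form of the Kruskal–Katona theorem, which asserts that a $t$-uniform family of size $\binom{y}{t}$ with real $y \geq t$ has shadow of size at least $\binom{y}{t-1}$. For a vertex $v$ I would work with its link $\Hc_v = \{H \setminus \{v\} : v \in H \in \Hc\}$, a $(k-1)$-uniform hypergraph with $|\Hc_v| = \deg(v)$. The first observation I would record is that the $(k-1)$-sets of $\sigma(\Hc)$ containing $v$ are exactly the sets $T \cup \{v\}$ with $T \in \sigma(\Hc_v)$; indeed $T \cup \{v\} \subseteq H$ for some $H \ni v$ iff $T \subseteq H \setminus \{v\} \in \Hc_v$. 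Consequently the shadow degree satisfies $\deg_{\sigma(\Hc)}(v) = |\sigma(\Hc_v)|$. (This is where $k-1 \geq 2$ is needed, so that the link is genuinely of uniformity at least $2$ and its shadow is a nontrivial family.)

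Next I would set up two double counts. Counting vertex–hyperedge incidences gives $k|\Hc| = \sum_v \deg(v) = \sum_v |\Hc_v|$, while counting vertex incidences in the shadow gives $(k-1)|\sigma(\Hc)| = \sum_v \deg_{\sigma(\Hc)}(v) = \sum_v |\sigma(\Hc_v)|$. For each vertex with $\deg(v) \geq 1$ I would pick the real $x_v \geq k-1$ with $\deg(v) = |\Hc_v| = \binom{x_v}{k-1}$; the hypothesis $d \leq \binom{x}{k-1}$ and the monotonicity of $\binom{\cdot}{k-1}$ then force $x_v \leq x$. Applying Lovász–Kruskal–Katona to each link yields $|\sigma(\Hc_v)| \geq \binom{x_v}{k-2}$.

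The heart of the computation is the elementary identity $\binom{x_v}{k-2} = \frac{k-1}{x_v-k+2}\binom{x_v}{k-1}$; combined with $x_v \leq x$ (so $1 \leq x_v-k+2 \leq x-k+2$) it gives $\binom{x_v}{k-2} \geq \frac{k-1}{x-k+2}\binom{x_v}{k-1}$ for every contributing vertex. Summing over $v$ and feeding the result into the two double counts, I would conclude
\begin{equation*}
	\frac{|\sigma(\Hc)|}{|\Hc|} = \frac{k}{k-1}\cdot\frac{\sum_v |\sigma(\Hc_v)|}{\sum_v|\Hc_v|} \geq \frac{k}{k-1}\cdot\frac{\sum_v \binom{x_v}{k-2}}{\sum_v \binom{x_v}{k-1}} \geq \frac{k}{k-1}\cdot\frac{k-1}{x-k+2} = \frac{k}{x-k+2}.
\end{equation*}

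I expect the main conceptual obstacle to be the very first step. A naive double count over the codegrees of $(k-1)$-sets fails, since the codegree of a fixed $(k-1)$-set can be as large as the vertex-degree bound itself, which is far larger than the target $x-k+2$. The decisive idea is to pass to the links, where the \emph{degree} hypothesis turns into a \emph{size} bound on a $(k-1)$-uniform family, and to measure the shadow through vertex degrees rather than codegrees. As a sanity check that the constants line up, one can verify equality for the clique $\binom{[n]}{k}$, where every $x_v = x = n-1$ and both the Kruskal–Katona step and the monotonicity step are tight, reproducing the ratio $\frac{k}{n-k+1}$.
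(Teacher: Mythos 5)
Your proposal is correct and follows essentially the same route as the paper: pass to the vertex links, double count vertex--hyperedge incidences in $\Hc$ and in $\sigma(\Hc)$, identify the shadow degree of $v$ with $|\sigma(\Hc_{-v})|$, and apply the Lov\'asz form of Kruskal--Katona to each link. The only cosmetic difference is that the paper packages the double count as a separate lemma bounding $\sm$ by $\frac{k}{k-1}\min_v \frac{|\sigma(\Hc_{-v})|}{|\Hc_{-v}|}$ and then invokes Corollary~\ref{cor:lovaszshadowratio} with the single parameter $x$, whereas you carry an explicit $x_v$ per vertex and sum termwise; the two computations are identical in substance.
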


This bound is tight only if $x$ is an integer. In this case, an extremal graph is a clique on $x+1$ vertices. What is interesting is that these cliques are extremal even among hypergraphs with much larger degree bound.

\begin{theorem}\label{thm:longintervall}
	If $t \geq k \geq 3$ are integers and $\Hc$ is a $k$-uniform hypergraph with degree bound
	\begin{equation*}
		d \leq \binom{t}{k-1}
		+ \binom{t -\left\lceil \frac{(t+1)}{k-1}\right\rceil}{k-2} 
		+ \binom{t -\left\lceil \frac{2(t+1)}{k-1}\right\rceil}{k-3} 
		+ \ldots 
		+ \binom{t - \left\lceil \frac{(k-2)(t+1)}{k-1}\right\rceil}{1},
	\end{equation*}
	then
	\begin{equation*}
		\sm \geq \frac{k}{t-k+2}.
	\end{equation*}
\end{theorem}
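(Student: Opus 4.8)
The plan is to pass from the global shadow ratio to a local quantity attached to the vertices, namely the shadow ratios of the \emph{links}. For a vertex $v$ write $\Hc_v = \{H \setminus \{v\} : v \in H \in \Hc\}$ for the link of $v$, a $(k-1)$-uniform hypergraph with $|\Hc_v| = \deg(v) \le d$. The first step is the identity
\[
(k-1)\,|\sigma(\Hc)| = \sum_{v} |\sigma(\Hc_v)|,
\]
which I would prove by checking that $S \mapsto S \setminus \{v\}$ is a bijection between the elements of $\sigma(\Hc)$ containing $v$ and the elements of $\sigma(\Hc_v)$: a $(k-1)$-set $S \ni v$ lies in $\sigma(\Hc)$ exactly when $S \setminus \{v\}$ is a $(k-2)$-subset of some edge of $\Hc_v$. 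Summing $|\{S \in \sigma(\Hc): v \in S\}|$ over $v$ counts each shadow set $k-1$ times, giving the identity. Together with $\sum_v \deg(v) = k|\Hc|$ and the mediant inequality (over vertices of positive degree) this yields
\[
\frac{|\sigma(\Hc)|}{|\Hc|} = \frac{k}{k-1} \cdot \frac{\sum_v |\sigma(\Hc_v)|}{\sum_v \deg(v)} \ge \frac{k}{k-1} \min_v \frac{|\sigma(\Hc_v)|}{\deg(v)}.
\]

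It now suffices to bound, for a single $(k-1)$-uniform hypergraph with at most $d$ edges, the shadow ratio from below by $\frac{k-1}{t-k+2}$; the factor $\frac{k}{k-1}$ then upgrades this to the desired $\frac{k}{t-k+2}$. Here I would invoke Kruskal--Katona: among $(k-1)$-uniform families with a fixed number $m$ of edges the shadow is minimised by the initial segment of the colexicographic order, so it is enough to prove that every colex initial segment of size $m \le d$ has shadow at least $\frac{k-1}{t-k+2}\,m$. Writing $\partial_{k-1}(m)$ for the Kruskal--Katona shadow bound and setting $\phi(m) := (t-k+2)\,\partial_{k-1}(m) - (k-1)m$, the whole theorem reduces to the single inequality $\phi(m) \ge 0$ for all $0 \le m \le d$.

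To verify this I would use the cascade representation. The stated degree bound is exactly the cascade
\[
d = \binom{t}{k-1} + \binom{t-c_1}{k-2} + \cdots + \binom{t-c_{k-2}}{1}, \qquad c_j = \left\lceil \tfrac{j(t+1)}{k-1}\right\rceil,
\]
whose Kruskal--Katona shadow is $\binom{t}{k-2} + \binom{t-c_1}{k-3} + \cdots + \binom{t-c_{k-2}}{0}$. Expanding the binomials, each summand satisfies
\[
(t-k+2)\binom{t-c_j}{k-2-j} \ge (k-1)\binom{t-c_j}{k-1-j},
\]
and a one-line rearrangement shows this is equivalent to $c_j \ge \frac{j(t+1)}{k-1}$ --- which is precisely why the ceiling appears in the definition of $c_j$. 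Summing over $j$ (with $c_0 = 0$, where the term gives equality) gives $\phi(d) \ge 0$. For a general $m \le d$ I would argue that $\phi$ is piecewise linear and concave on each cascade layer, since the marginal shadow of successive colex edges inside a layer is nonincreasing; the term-wise inequalities show that at the layer endpoints --- the partial sums of the cascade --- $\phi$ is nonnegative and nondecreasing starting from $\phi\!\left(\binom{t}{k-1}\right) = 0$. Concavity between nonnegative endpoints then forces $\phi \ge 0$ on all of $[0,d]$.

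The main obstacle is the cascade bookkeeping: besides the clean term-wise binomial inequalities, one must control $\phi$ \emph{between} the breakpoints, i.e.\ establish the monotonicity of the marginal shadow within each layer and confirm that the partial sums are themselves valid cascades (the tops $t > t-c_1 > \cdots > t-c_{k-2} \ge 1$ are strictly decreasing because $c_j$ is increasing). The extremal configurations fall out of the equality analysis: equality in the mediant inequality forces every link to be a colex-extremal $(k-1)$-uniform family of shadow ratio exactly $\frac{k-1}{t-k+2}$, which is realised both by the link of the clique $\binom{[t+1]}{k}$ (each link a clique on $t$ vertices) and by the critically full colex segments of size $d$; tracing these back globally is what produces the clique on $t+1$ vertices and, for the larger admissible degrees, the families obtained from a bigger clique by deleting disjoint maximal matchings until the degree drops to $d$.
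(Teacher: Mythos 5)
Your overall strategy is exactly the paper's: pass to the links $\Hc_{-v}$ via the double-counting identities $k|\Hc| = \sum_v |\Hc_v|$ and $(k-1)|\sigma(\Hc)| = \sum_v |\sigma(\Hc_{-v})|$ (the paper's Lemma~\ref{lem:key}), reduce to the claim that every $(k-1)$-uniform family with at most $d$ edges has shadow ratio at least $\frac{k-1}{t-k+2}$ (the paper's Lemma~\ref{lem:longintervall}), and verify the cascade endpoint via the term-wise inequality equivalent to $c_j \geq \frac{j(t+1)}{k-1}$, which is the same computation as in the paper. All of that is correct.

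The gap is in how you control $\phi(m) = (t-k+2)\,\partial_{k-1}(m) - (k-1)m$ \emph{between} the cascade breakpoints. You assert that the marginal shadow of successive colex edges within a layer is nonincreasing, hence $\phi$ is concave there, and conclude $\phi \geq 0$ from nonnegativity at the endpoints. This monotonicity is false: already for $2$-uniform colex initial segments the shadow sizes are $2,3,3,4,4,4,5,\ldots$, so the increments run $1,0,1,0,0,1,\ldots$, and the Kruskal--Katona function is not concave on any layer of length more than $2$ (the same failure occurs in each layer of your cascade, where the restriction of $\partial_{k-1}$ is a shifted copy of some $F_{j}$). So concavity cannot be the mechanism, and your argument as written does not establish $\phi \geq 0$ off the breakpoints --- not even on the first layer $[0,\binom{t}{k-1}]$, where the claim is Sperner's ratio bound (Lemma~\ref{lem:sperner} in the paper), a true but separately-proved fact. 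The repair is the one the paper uses: for $m = m_l + x$ with $m_l$ the $l$-th partial sum and $0 < x \leq \binom{t-c_{l+1}}{k-2-l}$, the $k{-}1$-binomial representation splits, so $\partial_{k-1}(m) = \partial_{k-1}(m_l) + F_{k-2-l}(x)$ and
\[
\frac{\partial_{k-1}(m)}{m} = (1-\lambda)\frac{\partial_{k-1}(m_l)}{m_l} + \lambda\frac{F_{k-2-l}(x)}{x}, \qquad \lambda = \tfrac{x}{m},
\]
after which the remainder term is bounded below by Sperner's \emph{ratio} lemma, $\frac{F_{k-2-l}(x)}{x} \geq \frac{k-2-l}{t-c_{l+1}-(k-2-l)+1} \geq \frac{k-1}{t-k+2}$, the last step being your ceiling inequality. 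This is a statement about ratios, not about concavity, and it is what actually closes the argument.
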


What happens after this degree bound? We can show that the clique on $t+1$ vertices is not extremal much longer.

\begin{prop}\label{prop:example}
	For every integer $t \geq k \geq 3$ there exists a $k$-uniform hypergraph $\Hc$ with degree bound $d$ such that
	\begin{equation*}
	d \leq \binom{t}{k-1} 
	+ \binom{t+1 - \left\lceil \frac{t+2}{k} \right\rceil}{k-2},
	\end{equation*}
	and
	\begin{equation*}
	\sm < \frac{k}{t-k+2}.
	\end{equation*}
\end{prop}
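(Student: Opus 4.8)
The plan is to beat the clique on $t+1$ vertices by gluing a second, smaller clique onto it through one new vertex. Concretely, I would work on the vertex set $[t+2]$, fix a subset $A \subseteq [t+1]$ of size $s$ (to be chosen below), and set
\[
\Hc = \binom{[t+1]}{k} \cup \binom{A \cup \{t+2\}}{k},
\]
the union of a clique on the old $t+1$ vertices and a clique on $A$ together with the new vertex $t+2$, the two overlapping exactly in $\binom{A}{k}$. Equivalently, $\Hc$ is the $(t+1)$-clique together with the cone $\{\{t+2\}\cup B : B \in \binom{A}{k-1}\}$ of apex $t+2$ over $\binom{A}{k-1}$.

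First I would record the three relevant quantities, all by inclusion--exclusion and Pascal's identity, which are routine. Since the two cliques meet in $\binom{A}{k}$, we get $|\Hc| = \binom{t+1}{k} + \binom{s}{k-1}$, and since their shadows meet in $\binom{A}{k-1}$, we get $|\sigma(\Hc)| = \binom{t+1}{k-1} + \binom{s}{k-2}$. For the degrees: a vertex of $[t+1]\setminus A$ has degree $\binom{t}{k-1}$, a vertex of $A$ has degree $\binom{t}{k-1} + \binom{s-1}{k-2}$, and the apex $t+2$ has degree $\binom{s}{k-1}$. A one-line Pascal computation gives $\binom{s}{k-1} = \binom{s-1}{k-1}+\binom{s-1}{k-2} \le \binom{t}{k-1}+\binom{s-1}{k-2}$ whenever $s \le t$, so the maximum degree is $\Delta(\Hc) = \binom{t}{k-1} + \binom{s-1}{k-2}$, attained on $A$.

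Now I would choose $s = t+2 - \lceil \tfrac{t+2}{k}\rceil$ and take $d = \Delta(\Hc)$. With this choice $s-1 = t+1-\lceil\tfrac{t+2}{k}\rceil$, so $d$ equals the bound in the statement exactly; one checks $s \le t$ and $s \ge k$, so all binomials are positive and $A$ fits inside $[t+1]$. It then remains to compare shadow ratios. Writing $\frac{k}{t-k+2} = \frac{\binom{t+1}{k-1}}{\binom{t+1}{k}}$ and applying the mediant inequality, the target $\sm < \frac{k}{t-k+2}$ follows from the strictly smaller incremental ratio
\[
\frac{\binom{s}{k-2}}{\binom{s}{k-1}} = \frac{k-1}{s-k+2} < \frac{k}{t-k+2}.
\]
Clearing denominators, this is $s > \frac{(k-1)t+k-2}{k}$, which after substituting the chosen $s$ reduces to the elementary fact $\lceil \tfrac{t+2}{k}\rceil < \tfrac{t+2}{k}+1$, valid for every real argument.

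The computations are all routine; the only genuinely delicate point is the choice of $s$. It must be large enough to push the incremental ratio $\frac{k-1}{s-k+2}$ below $\frac{k}{t-k+2}$ (forcing $s > \frac{(k-1)t+k-2}{k}$), yet small enough that the resulting maximum degree $\binom{t}{k-1}+\binom{s-1}{k-2}$ stays within the stated bound (forcing $s \le t+2-\lceil\tfrac{t+2}{k}\rceil$). I expect the main obstacle to be checking that these two requirements are simultaneously satisfiable, i.e. that the admissible integer window for $s$ is nonempty; the reduction above shows that the extreme admissible value $s = t+2-\lceil\tfrac{t+2}{k}\rceil$ already lies strictly inside the required range, which is exactly what makes the degree bound in the statement the natural threshold for this family.
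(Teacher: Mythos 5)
Your construction is exactly the paper's: your $\Hc=\binom{[t+1]}{k}\cup\binom{A\cup\{t+2\}}{k}$ with $|A|=s=t+2-\left\lceil\frac{t+2}{k}\right\rceil$ coincides with the shifted family $\binom{[t+1]}{k}\cup\left\{E\cup\{t+2\}:E\in\binom{[s]}{k-1}\right\}$ used there, and your degree count, shadow count, and mediant-inequality step (the paper phrases it as a convex combination) reduce to the same final estimate $\left\lceil\frac{t+2}{k}\right\rceil<\frac{t+2}{k}+1$. The argument is correct and essentially identical to the paper's proof.
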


The proof of Proposition~\ref{prop:example} is just a construction of a hypergraph with max degree $\binom{t}{k-1} 
+ \binom{t+1 - \left\lceil \frac{t+2}{k} \right\rceil}{k-2}$ and lower shadow ratio, than that of $\binom{[t+1]}{k}$. This hypergraph might be extremal for the given degree bound, but for the proof better lower bounds are needed.

One more interval where we can give extremal hypergraphs, is the case, when the degree bound is smaller than the max degree of $\binom{[t+2]}{k}$, but still close to it. In this case, one can get extremal hypergraphs by deleting some disjoint maximal matchings from $\binom{[t+2]}{k}$ if $k | t+2$.

\begin{theorem}\label{thm:shortintervall}
	If $t \geq k \geq 3$ are integers and $\Hc$ is a $k$-uniform hypergraph with degree bound
	\begin{equation*}
		\binom{t+1}{k-1} 
		- \frac{k-2}{k-1}(t-k+3) 
		\leq d 
		< \binom{t+1}{k-1},
	\end{equation*}
	then
	\begin{equation*}
		\sm 
		\geq \frac{k\binom{t+1}{k-2}}{(k-1)d}
		= \frac{\binom{t+2}{k-1}}{\binom{t+2}{k} - \frac{t+2}{k}\left(\binom{t+1}{k-1} - d\right)}.
	\end{equation*}
\end{theorem}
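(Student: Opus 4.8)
The plan is to pass to the links of $\Hc$ and reduce the whole statement to a single clean inequality about the Kruskal--Katona shadow function. For a vertex $v$ let $L_v=\{H\setminus\{v\}:v\in H\in\Hc\}$ be its link, a $(k-1)$-uniform hypergraph with $|L_v|=\deg(v)$. A $(k-1)$-set $S$ with $v\in S$ lies in $\sigma(\Hc)$ precisely when $S\setminus\{v\}\in\sigma(L_v)$, so counting the pairs $(v,S)$ with $v\in S\in\sigma(\Hc)$ in two ways gives the identity
\begin{equation*}
(k-1)|\sigma(\Hc)|=\sum_{v}|\sigma(L_v)|,
\end{equation*}
while $\sum_v\deg(v)=k|\Hc|$. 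Hence it suffices to prove the \emph{linear} per-link bound
\begin{equation*}
|\sigma(G)|\ge\frac{\binom{t+1}{k-2}}{d}\,|G|\qquad\text{for every $(k-1)$-uniform $G$ with $|G|\le d$,}
\end{equation*}
since summing it over the links $L_v$ and dividing by $k-1$ yields exactly $\sm\ge\frac{k\binom{t+1}{k-2}}{(k-1)d}$.

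By the Kruskal--Katona theorem the minimum of $|\sigma(G)|$ over $|G|=m$ is attained on colex-initial segments, so writing $f(m)$ for this extremal function it is enough to show $f(m)\ge\frac{\binom{t+1}{k-2}}{d}m$ for $1\le m\le d$; geometrically, that the secant from the origin to $(d,f(d))$ stays below the graph of $f$ on $[0,d]$. First I would compute $f(d)$. With $s=\binom{t+1}{k-1}-d$ the hypothesis gives $s\le\frac{k-2}{k-1}(t-k+3)<t-k+3$, so the colex segment of size $d$ is the clique $\binom{[t+1]}{k-1}$ with its $s$ colex-largest $(k-1)$-sets deleted; killing a $(k-2)$-subset of $[t+1]$ from the shadow would require deleting all $t-k+3\,(>s)$ edges through it, so every $(k-2)$-subset survives and $f(d)=\binom{t+1}{k-2}$. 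Thus the target slope $\binom{t+1}{k-2}/d$ is exactly the shadow ratio of the conjectured extremal example.

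The heart of the proof is verifying the secant bound for all $m\le d$. Set $m^\ast:=\binom{t+1}{k-1}-(t-k+3)$. For $m\in(m^\ast,d]$ the argument above still shows $f(m)=\binom{t+1}{k-2}$, so $f(m)/m$ merely decreases down to its value $f(d)/d$ at $m=d$ and these $m$ are harmless. The first place where the colex shadow genuinely drops is $m^\ast$, with $f(m^\ast)=\binom{t+1}{k-2}-1$, and the inequality $f(m^\ast)/m^\ast\ge f(d)/d$ rearranges, using $\binom{t+1}{k-1}=\frac{t-k+3}{k-1}\binom{t+1}{k-2}$, to
\begin{equation*}
s\le\frac{k-2}{k-1}(t-k+3)\cdot\frac{\binom{t+1}{k-2}}{\binom{t+1}{k-2}-1},
\end{equation*}
which is guaranteed by the hypothesis $s\le\frac{k-2}{k-1}(t-k+3)$ since the last factor exceeds $1$. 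For the remaining $m\le m^\ast$ the colex configuration is a clique on at most $t$ vertices with a few extra edges, whose ratio $f(m)/m$ only rises as $m$ shrinks toward the cliques $\binom{[t]}{k-1},\binom{[t-1]}{k-1},\dots$; the clique ratios $\frac{k-1}{N-k+2}$ increase as $N$ decreases, and already $\frac{k-1}{t-k+2}\ge f(d)/d$ in our range, so no smaller $m$ violates the bound. This gives the per-link bound and hence the inequality; the second displayed form of the bound follows from $\frac{t+2}{k}\binom{t+1}{k-1}=\binom{t+2}{k}$, and equality is attained when $k\mid t+2$ by deleting $s$ disjoint perfect matchings from $\binom{[t+2]}{k}$ (which exist by Baranyai's theorem), each lowering every degree by exactly one while leaving the shadow full.

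The step I expect to be the main obstacle is the precise control of $f$ between consecutive cliques: proving rigorously that the minimum of $f(m)/m$ on $[1,d]$ is attained either at $m=d$ or at the first dip $m^\ast$, and that every later (deeper) dip and every interior point of an inter-clique interval stays above the secant. This forces one to track, in the cascade/colex representation, how the shadow recovers after each vertex completion, rather than merely invoking the Lovász inequality $|\sigma(G)|\ge\binom{x}{k-2}$ for $|G|=\binom{x}{k-1}$, which is too lossy at the non-binomial values of $m$ occurring here and would underestimate the shadow of the spread-out extremal example.
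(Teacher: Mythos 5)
Your overall strategy is exactly the paper's: the link identity $(k-1)|\sigma(\Hc)|=\sum_v|\sigma(L_v)|$ together with $\sum_v\deg(v)=k|\Hc|$ is Lemma~\ref{lem:key}, and the per-link secant bound you reduce to is precisely Lemma~\ref{lem:shortintervall} (applied with $k-1$ in place of $k$ and $a=t$). Your computation of $f(d)=\binom{t+1}{k-2}$, your treatment of $m\in(m^\ast,d]$, and your verification at $m=m^\ast$ all match the paper's cases $m>d_0$ and $m=d_0$ and are correct. However, the case $m<m^\ast$ --- which you yourself flag as the main obstacle --- is a genuine gap, and the route you sketch for it fails. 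The ratio $f(m)/m$ is not monotone as $m$ decreases below $m^\ast$, and, more to the point, your intermediate claim that $f(m)/m\ge\frac{k-1}{t-k+2}$ for all $m<m^\ast$ is false. For $k=4$ one has $m^\ast=\binom{t}{3}+\binom{t-1}{2}$ and $f(m^\ast-1)=\binom{t+1}{2}-1$, and cross-multiplying shows $\frac{f(m^\ast-1)}{m^\ast-1}<\frac{3}{t-2}$ for all $t\ge5$. The clique bound of Lemma~\ref{lem:sperner} only covers $m\le\binom{t}{k-1}$, and even its extension Lemma~\ref{lem:longintervall} reaches only up to $\binom{t}{k-1}+\binom{t-\lceil(t+1)/(k-1)\rceil}{k-2}+\cdots$, which falls well short of $m^\ast$; in the gap between these thresholds and $m^\ast$ the ratio genuinely dips below the clique value. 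So ``no smaller $m$ violates the bound'' does not follow from the clique ratios.

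What is actually needed for $m<m^\ast$ is only the weaker inequality $f(m)/m\ge f(d)/d$, and the paper proves it by a term-by-term argument rather than by monotonicity: write the $(k-1)$-binomial representation $m=\binom{a_{k-1}}{k-1}+\ldots+\binom{a_1}{1}$, observe $a_i\le t-(k-1)+i$, compare each summand's local ratio $\binom{a_i}{i-1}/\binom{a_i}{i}=i/(a_i-i+1)$ with the corresponding summand of $d_0=\binom{t}{k-1}+\ldots+\binom{t-k+3}{2}$ as in (\ref{eq:aibecsles}), treat the two lowest-order terms jointly via Lemma~\ref{lem:sperner} as in (\ref{eq:a2becsles}), and chain the resulting mediant inequalities to obtain $f(m)/m\ge f(d_0)/d_0\ge f(d)/d$. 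Without an argument of this kind your proof is incomplete at exactly the step you identified; everything else (including the algebraic identity for the second displayed form of the bound and the Baranyai extremal example) is fine.
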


The paper is organised as follows. In Section~\ref{sec:sizebound} we cover the famous Kruskal-Katona Theorem and give some new bounds on the shadow ratio of bounded size hypergraphs. In Section~\ref{sec:largedegree} we prove Theorem~\ref{thm:lovaszdegree}, \ref{thm:longintervall} and \ref{thm:shortintervall} by connecting the case of bounded degree hypergraphs to the case of bounded size hypergraphs. In Section~\ref{sec:smalldegree} an elementary proof of Theorem~\ref{thm:lowdegree} is given. Section~\ref{sec:example} contains the description of a hypergraph family which proves Proposition~\ref{prop:example}. In the end, in Section~\ref{sec:conclusion} we pose some open questions.

\section{Shadow ratio of hypergraphs with bounded size}\label{sec:sizebound}

For uniform hypergraphs of fixed size, the famous Kruskal-Katona Shadow Theorem determines the exact lower bound on the size of the shadow. To state it, we will need  the following

\begin{lemma}[\cite{katona1968theorem}]\label{lem:kbinom}
	For every positive integer $m$ and $k$ there uniquely exists $a_k > a_{k-1} > \ldots a_l \geq l \geq 1$ such that
	\[
	m = \binom{a_k}{k} + \binom{a_{k-1}}{k-1} + \ldots + \binom{a_l}{l}.
	\]
	We call this sum the $k$-binomial representation of $m$.
\end{lemma}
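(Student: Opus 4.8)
The plan is to prove existence and uniqueness together through a greedy, largest-term-first construction, the crux being a sharp bound on the value of any admissible representation in terms of its leading term.

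First I would prove the central inequality: whenever $a_k > a_{k-1} > \cdots > a_l \geq l \geq 1$,
\[
\binom{a_k}{k} + \binom{a_{k-1}}{k-1} + \cdots + \binom{a_l}{l} < \binom{a_k+1}{k}.
\]
Under the strict-decrease constraint, and since all summands are positive, the tail is maximized by taking consecutive upper indices $a_{k-1}=a_k-1,\ a_{k-2}=a_k-2,\dots$ all the way down to index $1$, so it suffices to check the extremal case
\[
\binom{a_k}{k} + \binom{a_k-1}{k-1} + \cdots + \binom{a_k-k+1}{1} = \binom{a_k+1}{k} - 1 < \binom{a_k+1}{k}.
\]
Reindexing with $b=a_k-k$ turns the left-hand side into $\sum_{j=1}^{k}\binom{b+j}{b}$, which the hockey-stick identity collapses to $\binom{b+k+1}{b+1}-1=\binom{a_k+1}{k}-1$. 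This identity is the heart of the argument; the rest is bookkeeping.

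The inequality immediately forces the leading index. Since all summands are nonnegative, any admissible representation of $m$ satisfies $\binom{a_k}{k}\leq m<\binom{a_k+1}{k}$, so $a_k$ can only be the largest integer with $\binom{a_k}{k}\leq m$. For existence I would run the greedy algorithm: pick this $a_k$, set $m'=m-\binom{a_k}{k}$, and recurse on $m'$ with index $k-1$. Maximality gives $m<\binom{a_k+1}{k}=\binom{a_k}{k}+\binom{a_k}{k-1}$, hence $m'<\binom{a_k}{k-1}$, which forces the next greedy choice to satisfy $a_{k-1}<a_k$; this is precisely the required strict decrease. The recursion terminates because the index drops by one and the remainder strictly decreases at each step, stopping at some level $l\geq 1$ when the remainder reaches $0$.

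Uniqueness then follows by induction on $k$: the characterization above forces $a_k$, and subtracting $\binom{a_k}{k}$ reduces the problem to a $(k-1)$-binomial representation of $m'<\binom{a_k}{k-1}$ whose leading index is below $a_k$, which is unique by the inductive hypothesis. The only delicate points are the base case $k=1$ (where $m=\binom{m}{1}$ is visibly the unique representation) and the boundary behavior when a remainder vanishes early, both handled by the standard conventions $\binom{a}{j}=0$ for $a<j$ and $\binom{j}{j}=1$. I expect the sole real obstacle to be establishing the telescoping identity cleanly; once it is in hand, both existence and uniqueness are routine.
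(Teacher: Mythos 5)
The paper does not prove this lemma at all---it is quoted from Katona's 1968 paper as a known classical fact---so there is no internal proof to compare against. Your argument is correct and is essentially the standard textbook proof: the key inequality $\binom{a_k}{k}+\cdots+\binom{a_l}{l}<\binom{a_k+1}{k}$ (via the hockey-stick telescoping of the extremal chain $a_{k-1}=a_k-1,\dots,a_1=a_k-k+1$) pins down the leading index, and greedy subtraction plus induction gives existence and uniqueness. The only point worth a word in a written version is that the extremal chain down to index $1$ is admissible, i.e.\ $a_k-k+1\geq 1$; this follows from the chain $a_k>a_{k-1}>\cdots>a_l\geq l$, which forces $a_k\geq k$. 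With that noted, the proof is complete.
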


The $k$-binomial representation of positive integers plays a crucial role in the Shadow Theorem. 

\begin{definition}\label{def:shadowfunction}
	If $m$ is a positive integer with $k$-binomial representation $m = \binom{a_k}{k} + \binom{a_{k-1}}{k-1} + \ldots + \binom{a_l}{l}$ then the value of the $k$-order shadow function at $m$ is
	\[
	F_k(m) =  \binom{a_k}{k-1} + \binom{a_{k-1}}{k-2} + \ldots + \binom{a_l}{l-1}.
	\]
\end{definition}

The $k$-order shadow function determines the exact lower bound on the size of the shadow of $k$-uniform hypergraphs of size $m$. This statement is the Kruskal-Katona Shadow Theorem \cite{kruskal1963simplices, katona1968theorem}.

\begin{theorem}[Kruskal-Katona Shadow Theorem]\label{thm:shadow}
	For every $k$-uniform hypergraph $\Hc$
	\[
	|\sigma(\Hc)| \geq F_k(|\Hc|).
	\]
	Moreover, if $m \leq \binom{a}{k}$, then there exists a $\Hc \subset \binom{[a]}{k}$, $|\Hc| = m$ hypergraph with
	\[
	|\sigma(\Hc)| = F_k(|\Hc|).
	\]
\end{theorem}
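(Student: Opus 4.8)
The plan is to establish both the lower bound and its tightness by the classical \emph{compression} (or \emph{shifting}) method: repeatedly apply a size-preserving operation that can only shrink the shadow, until the family reaches a canonical form whose shadow is computable directly. For each pair $1 \le i < j \le n$ I would define the compression $C_{ij}$ that replaces every hyperedge $A$ with $j \in A$, $i \notin A$ by $(A \setminus \{j\}) \cup \{i\}$ whenever the latter is not already present, and fixes all other hyperedges; the same operator acts on $(k-1)$-uniform families. This clearly preserves $|\Hc|$, and the key lemma is that it never enlarges the shadow,
\[
|\sigma(C_{ij}(\Hc))| \le |\sigma(\Hc)|,
\]
which I would prove by verifying the inclusion $\sigma(C_{ij}(\Hc)) \subseteq C_{ij}(\sigma(\Hc))$ together with $|C_{ij}(\sigma(\Hc))| = |\sigma(\Hc)|$. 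Since a compression that actually moves a hyperedge strictly lowers the weight $\sum_{A \in \Hc} \sum_{a \in A} a$, iterating over all pairs terminates at a \emph{compressed} family $\mathcal{B}$ with $C_{ij}(\mathcal{B}) = \mathcal{B}$ for all $i<j$, satisfying $|\mathcal{B}| = |\Hc|$ and $|\sigma(\mathcal{B})| \le |\sigma(\Hc)|$. It therefore suffices to prove $|\sigma(\mathcal{B})| \ge F_k(|\mathcal{B}|)$ for compressed families.

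For the compressed case I would induct on the size of the ground set, splitting $\mathcal{B}$ at its largest element $n$. With the notation $X \vee \mathcal{F} := \{X \cup F : F \in \mathcal{F}\}$, set $\mathcal{B}' = \{A \in \mathcal{B} : n \notin A\}$ (a $k$-uniform family) and $\mathcal{L} = \{A \setminus \{n\} : n \in A \in \mathcal{B}\}$ (the $(k-1)$-uniform link), both on $[n-1]$. A short case analysis shows that the members of $\sigma(\mathcal{B})$ avoiding $n$ are exactly $\sigma(\mathcal{B}') \cup \mathcal{L}$, while those containing $n$ are $\{n\} \vee \sigma(\mathcal{L})$; compressedness moreover forces $\mathcal{L} \subseteq \sigma(\mathcal{B}')$. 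Hence
\[
|\sigma(\mathcal{B})| = |\sigma(\mathcal{B}')| + |\sigma(\mathcal{L})|, \qquad |\mathcal{B}| = |\mathcal{B}'| + |\mathcal{L}|.
\]
Applying the inductive hypothesis to $\mathcal{B}'$ and $\mathcal{L}$ and then reconciling the resulting binomial sums with the $k$-binomial representation of $|\mathcal{B}|$ — using the extra constraint $|\mathcal{L}| \le |\sigma(\mathcal{B}')|$ and the monotonicity of $F_k$ — yields the bound $F_k(|\mathcal{B}|)$.

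For tightness, given $m \le \binom{a}{k}$ with representation $m = \binom{a_k}{k} + \binom{a_{k-1}}{k-1} + \ldots + \binom{a_l}{l}$ (which exists and is unique by Lemma~\ref{lem:kbinom}, with $a_k \le a$), I would take $\Hc$ to be the initial segment of the colexicographic order of that size, namely the nested union
\[
\binom{[a_k]}{k} \cup \left(\{a_k+1\} \vee \binom{[a_{k-1}]}{k-1}\right) \cup \left(\{a_k+1,a_{k-1}+1\} \vee \binom{[a_{k-2}]}{k-2}\right) \cup \ldots,
\]
which lies inside $\binom{[a]}{k}$. Counting its $(k-1)$-subsets block by block (each new block contributes only the shadow sets that use its distinguished top elements) gives shadow size exactly $\binom{a_k}{k-1} + \binom{a_{k-1}}{k-2} + \ldots + \binom{a_l}{l-1} = F_k(m)$, matching the lower bound.

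The main obstacle is twofold: proving the compression lemma demands a careful case distinction according to whether a shadow set contains $i$, $j$, both, or neither, and whether its preimages were actually moved; and, more substantially, the inductive step requires delicate arithmetic to show that the minimum of $F_k(|\mathcal{B}'|) + F_{k-1}(|\mathcal{L}|)$ over the admissible splits $|\mathcal{B}'| + |\mathcal{L}| = |\mathcal{B}|$ is attained in agreement with the $k$-binomial representation. That bookkeeping, rather than any single conceptual idea, is where the weight of the proof sits.
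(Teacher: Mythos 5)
The paper does not prove this theorem at all: it is quoted as the classical Kruskal--Katona result with citations to Kruskal (1963) and Katona (1968) and then used as a black box, so there is no in-paper argument to compare yours against. Judged on its own, your outline is the standard compression/shifting proof and its skeleton is sound: the operator $C_{ij}$ preserves cardinality, the inclusion $\sigma(C_{ij}(\mathcal{A}))\subseteq C_{ij}(\sigma(\mathcal{A}))$ is the right way to see that compression does not enlarge the shadow, the weight $\sum_{A}\sum_{a\in A}a$ guarantees termination, the identities $|\sigma(\mathcal{B})|=|\sigma(\mathcal{B}')|+|\sigma(\mathcal{L})|$ and $\mathcal{L}\subseteq\sigma(\mathcal{B}')$ for compressed families are correct, and the colexicographic initial segment does achieve $F_k(m)$ by exactly the block-by-block count you describe.

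The genuine gap is the step you dismiss as ``bookkeeping'': the inductive inequality is not a routine reconciliation of binomial sums, and in particular the naive version of it is false. Writing $\ell=|\mathcal{L}|$ and $m=|\mathcal{B}|$, induction alone gives $|\sigma(\mathcal{B})|\ge F_k(m-\ell)+F_{k-1}(\ell)$, and this can drop below $F_k(m)$: for $k=3$, $m=10=\binom{5}{3}$ and $\ell=9$ one gets $F_3(1)+F_2(9)=3+5=8<10=F_3(10)$. So the constraint $\ell\le|\sigma(\mathcal{B}')|$ is not an optional extra hypothesis but must be woven into the induction; the arithmetic lemma you actually need is that $\max\bigl(F_k(m-\ell),\,\ell\bigr)+F_{k-1}(\ell)\ge F_k(m)$ for all $0\le\ell\le m$, or alternatively (Frankl's route) a separate contradiction argument showing that compressedness forces $\ell$ to be at least $\binom{a_k-1}{k-1}+\binom{a_{k-1}-1}{k-2}+\cdots$, after which Pascal's rule finishes. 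Until one of these is proved, the central inequality $|\sigma(\mathcal{B})|\ge F_k(|\mathcal{B}|)$ for compressed families --- which is the whole content of the theorem --- remains unestablished. A second, smaller omission: the case analysis behind $|\sigma(C_{ij}(\mathcal{A}))|\le|\sigma(\mathcal{A})|$ is also deferred, and while it is standard, it is exactly where a careless definition of $C_{ij}$ (forgetting the ``whenever the latter is not already present'' clause on the shadow side) breaks down.
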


The Shadow Theorem determines $\frac{F_k(m)}{m}$ for every value of $k$ and $m$, but these values are hard to handle, even if we just need a statement about hypergraphs of bounded size instead of fixed size. One possible solution is to use a weaker version of the Shadow Theorem due to Lovász \cite{lovasz2007combinatorial}, which gives a not tight, but easily applicable lower bound on the shadow. For any positive integer $k$ and real number $x$, let $\binom{x}{k} = \frac{x(x-1)\ldots(x-k+1)}{k!}$.

\begin{theorem}\label{thm:lovaszshadow}
	If $k$ is a positive integer and $x \geq k$ a real number such that $\binom{x}{k}$ is an integer, then
	\[F_k\left(\binom{x}{k}\right) \geq \binom{x}{k-1}.
	\]
	Equality holds if and only if $x$ is an integer.
\end{theorem}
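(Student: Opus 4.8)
The plan is to prove the stated numerical inequality by induction on $k$, exploiting the self-similar, recursive nature of both the $k$-binomial representation and the shadow function $F_k$. Throughout I would use that $t \mapsto \binom{t}{k}$ is strictly increasing for real $t \geq k-1$, so every positive integer $m$ equals $\binom{x}{k}$ for a unique real $x \geq k-1$; the hypothesis $x \geq k$ simply says $m = \binom{x}{k} \geq 1$. The base case $k=2$ is transparent: writing $a = \lfloor x \rfloor$ and $m' = m - \binom{a}{2} \in \{0,\dots,a-1\}$, the $2$-binomial representation gives $F_2(m) = a$ if $m'=0$ and $F_2(m) = a+1$ if $m' \geq 1$, while $\binom{x}{1} = x \in [a,a+1)$; hence $F_2(m) \geq x$ with equality exactly when $x = a$ is an integer.

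For the inductive step, fix $x \geq k$ with $m := \binom{x}{k} \in \mathbb{Z}$ and set $a = \lfloor x \rfloor$. First I would locate the top term of the cascade: since $\binom{a}{k} \leq \binom{x}{k} < \binom{a+1}{k}$ and $\binom{\cdot}{k}$ is increasing, the largest binomial not exceeding $m$ is $\binom{a}{k}$, so $a_k = a$ in the $k$-binomial representation of $m$, and $m' := m - \binom{a}{k}$ is a nonnegative integer with $m' < \binom{a+1}{k} - \binom{a}{k} = \binom{a}{k-1}$. By the recursive definition of the shadow function, $F_k(m) = \binom{a}{k-1} + F_{k-1}(m')$.

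Now two cases. If $x$ is an integer then $m' = 0$, so $F_k(m) = \binom{a}{k-1} = \binom{x}{k-1}$, which is the equality case. Otherwise $x \in (a,a+1)$ and $m' \geq 1$; write $m' = \binom{x'}{k-1}$ for the unique real $x'$, which then satisfies $x' \geq k-1$ (because $m' \geq 1$) and $x' < a$ (because $m' < \binom{a}{k-1}$). Applying the induction hypothesis to $x'$ yields $F_{k-1}(m') \geq \binom{x'}{k-2}$, so everything reduces to the real-variable inequality
\[
\binom{a}{k-1} + \binom{x'}{k-2} \;\geq\; \binom{x}{k-1},
\]
together with the claim that it is \emph{strict} whenever $x \notin \mathbb{Z}$.

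This last inequality is the crux. Writing $j = k-1$, it reads $\binom{a}{j} + \binom{x'}{j-1} \geq \binom{x}{j}$, where $a$ is an integer, $x \in (a,a+1)$, $x' \in [j-1,a)$, and the two unknowns are tied by the constraint $\binom{a}{j+1} + \binom{x'}{j} = \binom{x}{j+1}$. I would treat the difference of the two sides as a continuous function $D$ of the single parameter $x' \in [j-1,a]$, with $x$ determined implicitly by the constraint. The endpoints are easy: at $x' = j-1$ one gets $x = a$ and $D = \binom{a}{j} + 1 - \binom{a}{j} = 1$, and at $x' = a$ one gets $x = a+1$ and $D = \binom{a}{j} + \binom{a}{j-1} - \binom{a+1}{j} = 0$ by Pascal's identity. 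The substance is to show that $D$ is monotonically decreasing on $[j-1,a]$, for then $D > 0$ on $[j-1,a)$, giving the strict inequality on the open interval $x \in (a,a+1)$ and simultaneously settling the equality characterization, since the integer case was already handled. After differentiating the constraint to eliminate $x$, monotonicity of $D$ reduces to a pointwise comparison of logarithmic derivatives of consecutive binomial coefficients, i.e.\ to a ratio-monotonicity (log-concavity type) property of $t \mapsto \binom{t}{j}$. Establishing the correct sign of this derivative is the only genuinely analytic step and is where I expect the main difficulty to lie.
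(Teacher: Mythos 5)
The paper does not actually prove this statement --- it is quoted as a known result from Lov\'asz's problem book \cite{lovasz2007combinatorial}, where the bound is obtained by a direct induction on the hypergraph itself (equivalently, the purely numerical form stated here follows by combining the Kruskal--Katona theorem with Lov\'asz's shadow bound). So you are not competing with an in-paper argument, and your overall plan --- induction on $k$ via the cascade identity $F_k(m)=\binom{a}{k-1}+F_{k-1}(m')$ with $a=\lfloor x\rfloor$ and $m'=m-\binom{a}{k}<\binom{a}{k-1}$ --- is a legitimate route: the base case, the identification $a_k=a$, the bounds $k-1\le x'<a$, and the reduction to the constrained inequality $\binom{a}{j}+\binom{x'}{j-1}\ge\binom{x}{j}$ subject to $\binom{a}{j+1}+\binom{x'}{j}=\binom{x}{j+1}$ are all correct.

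However, there is a genuine gap exactly where you say you expect the difficulty to lie: the crux inequality is never proved, and the mechanism you propose for it --- that $D(x')=\binom{a}{j}+\binom{x'}{j-1}-\binom{x}{j}$ is monotonically decreasing on $[j-1,a]$ --- is false. Take $j=2$, $a=3$ (i.e.\ $k=3$): at $x'=1$ the constraint forces $x=3$ and $D=1$, but differentiating the constraint gives $dx/dx'=\tfrac{1/2}{11/6}=\tfrac{3}{11}$ there, hence $dD/dx'=1-\tfrac{5}{2}\cdot\tfrac{3}{11}=\tfrac{7}{22}>0$; numerically $D(3/2)\approx 1.02>1=D(1)$. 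The same bump occurs for $j=3$, $a=4$, where $D(3)\approx 1.17>1=D(2)$. So the ``easy endpoint plus monotone decrease'' argument cannot close as stated. What you actually need is only positivity of $D$ on the range $x'\in[j,a)$ relevant to the induction (where $m'\ge 1$), and that does appear to hold --- perhaps $D$ is even decreasing once restricted to $[j,a]$ --- but establishing it is precisely the analytic content of the theorem and is absent from the proposal. Until that step is supplied (or replaced by Lov\'asz's hypergraph-level induction), the proof is incomplete.
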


This lower bound is not tight, but one can easily deduce a result about the shadow ratio of hypergraphs of bounded size from it.

\begin{cor}\label{cor:lovaszshadowratio}
	If $k$ is a positive integer, $x \geq k$ a real number and $m \leq \binom{x}{k}$ a positive integer, then
	\[
	\frac{F_k(m)}{m}
	\geq \frac{\binom{x}{k-1}}{\binom{x}{k}}
	= \frac{k}{x-k+1}.
	\]
\end{cor}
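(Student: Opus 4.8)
The plan is to reparametrize the size $m$ so that Theorem~\ref{thm:lovaszshadow} applies directly, and then to compare the resulting parameter with $x$. First I would record the elementary identity
\[
\frac{\binom{x}{k-1}}{\binom{x}{k}} = \frac{k}{x-k+1},
\]
which follows at once from the definition $\binom{x}{k}=\frac{x(x-1)\cdots(x-k+1)}{k!}$: cancelling the common factor $x(x-1)\cdots(x-k+2)$ leaves $\binom{x}{k}/\binom{x}{k-1} = (x-k+1)/k$. This disposes of the stated equality, so only the inequality remains.

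Next, since $m$ is a positive integer with $1 \le m \le \binom{x}{k}$, I would invoke the continuity and strict monotonicity of $y \mapsto \binom{y}{k}$ on $[k,\infty)$ to produce a real number $y$ with $k \le y \le x$ and $\binom{y}{k} = m$. In particular $\binom{y}{k}=m$ is an integer, so Theorem~\ref{thm:lovaszshadow}, applied with $y$ in place of $x$, yields
\[
F_k(m) = F_k\!\left(\binom{y}{k}\right) \ge \binom{y}{k-1}.
\]
Dividing by $m = \binom{y}{k}$ and using the identity above (now with $y$) gives
\[
\frac{F_k(m)}{m} \ge \frac{\binom{y}{k-1}}{\binom{y}{k}} = \frac{k}{y-k+1}.
\]
Finally, from $y \le x$ and $y \ge k > k-1$ I get $0 < y - k + 1 \le x - k + 1$, hence $\frac{k}{y-k+1} \ge \frac{k}{x-k+1}$, completing the chain.

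The only point requiring care—the main, and rather minor, obstacle—is justifying the existence of such a $y$ and the bound $y \le x$. For this I would note that $f(y) = \binom{y}{k} = \frac{y(y-1)\cdots(y-k+1)}{k!}$ is a product of $k$ factors each of which is positive and strictly increasing for $y \ge k$ (the smallest factor satisfies $y-k+1 \ge 1$), so $f$ is continuous and strictly increasing on $[k,\infty)$ with $f(k) = 1 \le m$. The intermediate value theorem then produces a unique $y \ge k$ with $f(y) = m$, and strict monotonicity together with $f(y) = m \le \binom{x}{k} = f(x)$ forces $y \le x$. I expect no deeper difficulty beyond this continuity-and-monotonicity bookkeeping, since the substance of the bound is carried entirely by Theorem~\ref{thm:lovaszshadow}.
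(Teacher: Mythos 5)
Your proof is correct and follows the route the paper intends: the corollary is presented as an easy consequence of Theorem~\ref{thm:lovaszshadow}, obtained exactly by choosing $y\in[k,x]$ with $\binom{y}{k}=m$ via monotonicity and continuity, applying the theorem at $y$, and then using $y\le x$ to compare $\frac{k}{y-k+1}$ with $\frac{k}{x-k+1}$. The bookkeeping on the existence and uniqueness of $y$ is exactly the right (and only) point needing care, and you have handled it properly.
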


This corollary is also tight if and only if $x$ is an integer. The result about integer $x$s is maybe the first result about shadow ratios due to Sperner \cite{sperner1928satz}.

\begin{lemma}\label{lem:sperner}
	If $k \leq a$ and $m \leq \binom{a}{k}$ are positive integers, then
	\[
	\frac{F_k(m)}{m}
	\geq \frac{k}{a-k+1}.
	\]
\end{lemma}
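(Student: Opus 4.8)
The plan is to obtain the bound as the integer specialization of the real‑variable ratio estimate already available, namely Corollary~\ref{cor:lovaszshadowratio}. First I would record the extremal configuration. When $m=\binom{a}{k}$ the $k$‑binomial representation is the single term $\binom{a}{k}$, so $F_k(m)=\binom{a}{k-1}$ and
\[
\frac{F_k(m)}{m}=\frac{\binom{a}{k-1}}{\binom{a}{k}}=\frac{k}{a-k+1},
\]
which shows the inequality is an equality for the full clique; the content of Lemma~\ref{lem:sperner} is therefore that no smaller $m$ beats it.

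To treat a general $m$, I would introduce a real parameter encoding its size. Since $\binom{y}{k}=\frac{y(y-1)\cdots(y-k+1)}{k!}$ is continuous and strictly increasing for $y\ge k$ with $\binom{k}{k}=1$ and $\binom{y}{k}\to\infty$, for every positive integer $m$ there is a real $y\ge k$ with $\binom{y}{k}=m$; the hypothesis $m\le\binom{a}{k}$ together with monotonicity forces $y\le a$. Applying Corollary~\ref{cor:lovaszshadowratio} with $x=y$ (equivalently, applying Theorem~\ref{thm:lovaszshadow} to get $F_k(m)\ge\binom{y}{k-1}$ and dividing by $m=\binom{y}{k}$) yields
\[
\frac{F_k(m)}{m}\ge\frac{k}{y-k+1}\ge\frac{k}{a-k+1},
\]
the last step using $y\le a$. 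This is exactly the assertion, so for this route there is no real obstacle; the work is entirely absorbed into the earlier results.

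If instead one wants the lemma to stand on its own (recovering Sperner's statement without the Lovász bound), I would argue directly from the $k$‑binomial representation $m=\binom{a_k}{k}+\cdots+\binom{a_l}{l}$, where $m\le\binom{a}{k}$ forces $a_k\le a$ and hence $a_i\le a-(k-i)$ for each present index $i$. Clearing denominators, the goal is $(a-k+1)\sum_i\binom{a_i}{i-1}\ge k\sum_i\binom{a_i}{i}$. The top term behaves perfectly, since $k\binom{a_k}{k}=(a_k-k+1)\binom{a_k}{k-1}$ gives a contribution $(a-a_k)\binom{a_k}{k-1}\ge 0$; however, a low‑order term with small $i$ and comparatively large $a_i-i+1$ can violate the inequality individually, so a naive term‑by‑term comparison fails. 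The main obstacle is precisely to make the surplus of the high‑order terms pay for the deficit of the low‑order ones, which must exploit the strict decrease $a_k>a_{k-1}>\cdots>a_l$. I expect this to go through by induction on the number of terms (peeling off $\binom{a_k}{k}$ and applying the inductive hypothesis to the remainder, whose leading index is $a_{k-1}\le a_k-1$), combined with the block‑monotonicity fact that $\frac{F_k(m)}{m}$ decreases as $m$ grows within an interval $\binom{b-1}{k}<m\le\binom{b}{k}$ and is thus minimized at $m=\binom{b}{k}$, where it equals $\frac{k}{b-k+1}\ge\frac{k}{a-k+1}$ because $b\le a$.
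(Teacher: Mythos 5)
Your first route is exactly how the paper intends this lemma to be obtained: it is the integer specialization $x=a$ of Corollary~\ref{cor:lovaszshadowratio} (the paper gives no separate proof, citing Sperner), and your derivation via a real $y$ with $\binom{y}{k}=m$ is precisely the mechanism behind that corollary. The self-contained argument you sketch in the last paragraph is unnecessary; the first two paragraphs already constitute a complete and correct proof.
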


Corollary \ref{cor:lovaszshadowratio} gives a strictly monotone decreasing bound of $\frac{F_k(m)}{m}$ in $m$. But the values of this shadow ratio function are far from strictly decreasing. We show two generalizations of Lemma~\ref{lem:sperner}, the first is about values of $m$ which are slightly bigger than $\binom{a}{k}$, the second is about values that are slightly smaller.

\begin{lemma}\label{lem:longintervall}
	Let $a \geq k > 0$ be positive integers. If
	\[
	0 < m \leq \sum_{u=0}^{k-1}\binom{a - \left\lceil\frac{u(a+1)}{k}\right\rceil}{k-u},
	\]
	then
	\[
	\frac{F_k(m)}{m}
	\geq \frac{k}{a-k+1}.
	\]
\end{lemma}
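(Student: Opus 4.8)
Throughout write $\rho=\frac{k}{a-k+1}$ and, for $0\le u\le k-1$, set $c_u=a-\left\lceil\frac{u(a+1)}{k}\right\rceil$, so that the hypothesis is $0<m\le\sum_{u=0}^{k-1}\binom{c_u}{k-u}$ and the conclusion is $\frac{F_k(m)}{m}\ge\rho$. The backbone of the argument is the recursion
\[
F_k\left(\binom{a}{k}+j\right)=\binom{a}{k-1}+F_{k-1}(j)\qquad\left(0\le j\le\binom{a}{k-1}\right),
\]
which is immediate from Definition~\ref{def:shadowfunction} and Lemma~\ref{lem:kbinom}: for $j<\binom{a}{k-1}$ the $(k-1)$-binomial representation of $j$ has leading index less than $a$, so prepending the term $\binom{a}{k}$ produces the $k$-binomial representation of $\binom{a}{k}+j$ and the shadow function reads off the asserted value; the boundary $j=\binom{a}{k-1}$, where both sides equal the value at $\binom{a+1}{k}$, is checked directly.

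Combining this with the elementary mediant inequality — if $\frac{P}{Q}\ge\rho$ and $\frac{x}{y}\ge\rho$ then $\frac{P+x}{Q+y}\ge\rho$ — and noting that $\frac{\binom{a}{k-1}}{\binom{a}{k}}=\rho$ exactly, the bound $\frac{F_k(m)}{m}\ge\rho$ for $m=\binom{a}{k}+j$ reduces to the lower-uniformity bound $\frac{F_{k-1}(j)}{j}\ge\rho$. This motivates an induction on the uniformity $r$, run from $r=1$ to $r=k$ with $a,k,\rho$ held fixed. Writing $T_r=\sum_{u=k-r}^{k-1}\binom{c_u}{k-u}$, I would prove the statement
\[
Q_r:\qquad \frac{F_r(j)}{j}\ge\rho\quad\text{for all }1\le j\le T_r,
\]
so that $Q_k$ is exactly the Lemma. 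The base case $r=1$ uses $F_1\equiv1$ together with $c_{k-1}\le\frac{1}{\rho}$. For the inductive step $Q_{r-1}\Rightarrow Q_r$ put $i=k-r$ and split the range: for $1\le j\le\binom{c_i}{r}$ apply Sperner's Lemma~\ref{lem:sperner} with parameters $(c_i,r)$; for $\binom{c_i}{r}<j\le T_r$ write $j=\binom{c_i}{r}+j'$, apply the recursion at uniformity $r$, and close with the mediant inequality, feeding in $\frac{F_{r-1}(j')}{j'}\ge\rho$ from $Q_{r-1}$.

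Both halves of the step rest on a single numerical inequality, namely $\frac{r}{c_i-r+1}\ge\rho$ with $i=k-r$ (it is what Sperner's Lemma yields in the first range, and it is the base ratio $\frac{\binom{c_i}{r-1}}{\binom{c_i}{r}}$ in the second). Clearing denominators, this collapses to $k\left\lceil\frac{i(a+1)}{k}\right\rceil\ge i(a+1)$, i.e.\ nothing more than $\lceil x\rceil\ge x$; this identity is the numerical heart of the proof and is precisely what forces the ceilings appearing in the definition of $c_u$. A second point to verify is that the recursion at uniformity $r$ is valid only for $j'\le\binom{c_i}{r-1}$, so one needs $T_{r-1}\le\binom{c_i}{r-1}$. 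Since consecutive ceilings differ by at least $\left\lfloor\frac{a+1}{k}\right\rfloor\ge1$, the $c_u$ are strictly decreasing; monotonicity of binomial coefficients in the upper index together with a hockey-stick summation then gives $T_{r-1}\le\binom{c_{i+1}+1}{r-1}\le\binom{c_i}{r-1}$, as required.

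The obstacle I expect is bookkeeping rather than depth. The sequence $c_u-(k-u)$ is non-increasing, so once it turns negative the corresponding binomial coefficients vanish and the associated ranges in $Q_r$ become empty; this is exactly what happens for small $a$ (for instance $a<2k-1$). The care needed is to ensure that every appeal to Sperner's Lemma, to the recursion, and to the hockey-stick bound is made only at indices where the relevant cliques genuinely exist, treating the degenerate terms as vacuous. Once this is tracked the induction closes and $Q_k$ gives the statement of the Lemma.
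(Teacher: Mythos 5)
Your proof is correct and follows essentially the same route as the paper: decompose $m$ along the terms of the given sum, use additivity of the shadow function over the $k$-binomial representation, bound each piece via Sperner's Lemma~\ref{lem:sperner}, and close with the mediant (convex combination) inequality, the ceilings entering exactly through $\lceil x\rceil\ge x$. The only difference is bookkeeping: you induct on the uniformity and peel off one leading binomial coefficient per step (which costs you the extra hockey-stick verification $T_{r-1}\le\binom{c_i}{r-1}$), whereas the paper inducts on the number of absorbed terms $l$ and splits off the whole residual $x\le\binom{c_{l+1}}{k-l-1}$ in one go, getting the validity of the binomial representation directly from the hypothesis on $m$.
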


\begin{proof}
	We prove the implication
	\begin{equation}\label{eq:longintervall}
		0 < m \leq \sum_{u=0}^{l}\binom{a - \left\lceil \frac{u(a+1)}{k}\right\rceil}{k-u} \implies \frac{F_k(m)}{m} \geq \frac{k}{a-k+1}
	\end{equation}
	by induction on $l$.
	
	For $l = 0$ it is just Lemma~\ref{lem:sperner}. Let us now suppose that (\ref{eq:longintervall}) is true for some $l$ and also that $\sum_{u=0}^{l}\binom{a - \left\lceil \frac{u(a+1)}{k}\right\rceil}{k-u} < m \leq \sum_{u=0}^{l+1}\binom{a - \left\lceil \frac{u(a+1)}{k}\right\rceil}{k-u}$. We can write $m$ as $m = \sum_{u= 0}^{l}\binom{a - \left\lceil \frac{u(a+1)}{k}\right\rceil}{k-u} + x$, where $0 < x \leq \binom{a - \left\lceil \frac{(l+1)(a+1)}{k}\right\rceil}{k-l-1}$. Let the $(k-l-1)$-binomial representation of $x$ be $x = \sum_{s = l+1}^{k-t}\binom{x_{k-s}}{k-s}$.
	\begin{claim}\label{claim:k-binom}
		The $k$-binomial representation of $m$ is
		\[m = \sum_{u= 0}^{l}\binom{a - \left\lceil \frac{u(a+1)}{k}\right\rceil}{k-u} + \sum_{s = l+1}^{k-t}\binom{x_{k-s}}{k-s}.\]
	\end{claim}
	\begin{proof}
		Since the above sum equals to $m$, the only thing we need to show is $a > a - \left\lceil\frac{a+1}{k}\right\rceil > \ldots > a - \left\lceil \frac{l(a+1)}{k}\right\rceil > x_{k-l-1} > \ldots > x_t$. The only interesting inequality here is $x_{k-l-1} < a - \left\lceil \frac{l(a+1)}{k}\right\rceil$, which immediately follows from $x \leq \binom{a - \left\lceil \frac{(l+1)(a+1)}{k}\right\rceil}{k-l-1}$.
	\end{proof}
	From the $k$-binomial representation of $m$ it can be seen, that $F_k(m) = F_k(m-x) + F_{k-l-1}(x)$. So
	\[
	\frac{F_k(m)}{m} =
	\frac{F_k(m-x) + F_{k-l-1}(x)}{m-x +x} = (1 - \lambda)\frac{F_k(m-x)}{m-x} + \lambda\frac{F_{k-l-1}(x)}{x},
	\]
	where $\lambda = \frac{x}{m} \in (0,1)$.
	
	The inequality $\frac{F_k(m-x)}{m-x} \geq \frac{k}{a-k+1}$ follows from the induction hypothesis. For $\frac{F_{k-l-1}(x)}{x}$, we can use Lemma~\ref{lem:sperner}, which says that since $x \leq \binom{a - \left\lceil \frac{(l+1)(a+1)}{k}\right\rceil}{k-l-1}$, the inequality
	\[
	\frac{F_{k-l-1}(x)}{x} \geq  
	\frac{k-l-1}{a - \left\lceil\frac{(l+1)(a+1)}{k}\right\rceil - k + l + 2} \geq
	\frac{k-l-1}{a - \frac{(l+1)(a+1)}{k} - k+l+2} = \frac{k}{a-k+1}
	\]
	holds.
\end{proof}

In a similar manner one can prove other inequalities for $\frac{F_k(m)}{m}$. We only need one more.

\begin{lemma}\label{lem:shortintervall}
	Let $a \geq k$ be positive integers. If
	\[
	\binom{a+1}{k} - \frac{k-1}{k}(a-k+2) \leq d \leq \binom{a+1}{k},
	\]
	then for every $m \leq d$
	\[
	\frac{F_k(m)}{m} \geq \frac{F_k(d)}{d} = \frac{\binom{a+1}{k-1}}{d}.
	\]
\end{lemma}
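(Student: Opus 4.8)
The plan is to first pin down the right-hand side and then show that $F_k(m)/m$ is minimised at $m=d$. Writing $j=\binom{a+1}{k}-d$, the hypothesis $d\ge\binom{a+1}{k}-\frac{k-1}{k}(a-k+2)$ together with $\frac{k-1}{k}<1$ forces $0\le j\le a-k+1$. For $1\le j\le a-k+1$ I would record the $k$-binomial representation
\[
d=\binom{a}{k}+\binom{a-1}{k-1}+\ldots+\binom{a-k+2}{2}+\binom{a-k+2-j}{1},
\]
whose validity is exactly the inequality $j\le a-k+1$ (the case $j=0$ being the clique). Applying $F_k$ term by term and using the telescoping identity $\binom{a+1}{k-1}=\binom{a}{k-1}+\ldots+\binom{a-k+2}{1}+1$ gives $F_k(d)=\binom{a+1}{k-1}$, the claimed value; equivalently, deleting fewer than $a-k+2$ edges from $\binom{[a+1]}{k}$ cannot remove every superset of a $(k-1)$-set, so the shadow stays full.

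Next I would reformulate $F_k(m)/m\ge\binom{a+1}{k-1}/d$. Setting $m=\binom{a+1}{k}-j'$ (so $m\le d\iff j'\ge j$) and writing $\ell(j')=\binom{a+1}{k-1}-F_k(m)$ for the shadow loss caused by deleting the $j'$ colexicographically largest edges, cross-multiplication turns the target into the linear bound
\[
\ell(j')\le\frac{\binom{a+1}{k-1}}{d}\,(j'-j),\qquad j'\ge j.
\]
By Theorem~\ref{thm:shadow} the minimum shadow is attained on the colex-initial family, so $\ell(j')$ really is the loss of the colex family. I would then prove two structural facts: deleting the $j'$-th largest edge $E_{j'}$ destroys exactly $t(E_{j'}):=\max\{t:\{1,\ldots,t\}\subseteq E_{j'}\}$ shadow sets, whence $\ell(j')=\sum_{i\le j'}t(E_i)$; and for $j'<\binom{a}{k-1}$ every deleted edge contains $a+1$, which yields the level-reduction $\ell_{a+1,k}(j')=\ell_{a,k-1}(j')$.

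With these tools the verification splits at $m=\binom{a}{k}$. For $m\le\binom{a}{k}$ (that is $j'\ge\binom{a}{k-1}$) Lemma~\ref{lem:sperner} gives $F_k(m)/m\ge k/(a-k+1)$, and the elementary estimate $\binom{a+1}{k-1}\ge(k-1)(a-k+2)$ (valid for $a\ge k\ge 3$) converts the hypothesis on $d$ into $k/(a-k+1)\ge\binom{a+1}{k-1}/d$, settling this range. For $\binom{a}{k}\le m\le d$ I would use $F_k\!\left(\binom{a}{k}+p\right)=\binom{a}{k-1}+F_{k-1}(p)$ to descend one uniformity at a time via the recursion above, the base being the critical case $j'=a-k+2$: there $\ell=1$, and
\[
\frac{\binom{a+1}{k-1}}{d}\,(a-k+2-j)\ge\frac{k}{a-k+2}\cdot\frac{a-k+2}{k}=1=\ell(a-k+2),
\]
precisely because $j\le\frac{k-1}{k}(a-k+2)$; the bound is tight here, so the constant $\frac{k-1}{k}$ is optimal.

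The main obstacle is the middle range. A single step of induction on $k$ reduces the level-$k$ bound on $[\binom{a}{k},d]$ to the level-$(k-1)$ statement, but only under the hypothesis $j\le\frac{k-2}{k-1}(a-k+2)$, which is strictly stronger than the available $j\le\frac{k-1}{k}(a-k+2)$ since $\frac{k-2}{k-1}<\frac{k-1}{k}$; this gap is genuine and reflects that the admissible constant improves as $k$ grows. I would close it by iterating $\ell_{a+1,k}(j')=\ell_{a,k-1}(j')$ while keeping the generous right-hand side $\frac{\binom{a+1}{k-1}}{d}(j'-j)$ fixed — rather than invoking the weaker level-$(k-1)$ slope $\binom{a}{k-2}/d'$ at each step — thereby reducing to a direct estimate of $\sum_{i\le j'}t(E_i)$ at uniformity $3$, where it can be evaluated explicitly. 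The technical heart is thus showing that, once $\ell$ leaves $0$ at $j'=a-k+2$, it accumulates slowly enough that the chord bound survives for every $j'$.
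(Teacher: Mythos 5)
Your setup is sound: the $k$-binomial representation of $d$ and the value $F_k(d)=\binom{a+1}{k-1}$ are correct, the reformulation of the lemma as the chord bound $\ell(j')\le\frac{\binom{a+1}{k-1}}{d}(j'-j)$ on the colex shadow loss is equivalent to the statement, the identity $\ell(j')=\sum_{i\le j'}t(E_i)$ correctly describes how colex-final deletions erode the shadow, and the range $m\le\binom{a}{k}$ does follow from Lemma~\ref{lem:sperner} together with $\binom{a+1}{k-1}\ge(k-1)(a-k+2)$. The genuine gap is that the range $\binom{a}{k}<m<d$ --- which is where the lemma actually lives and where the constant $\frac{k-1}{k}$ is tested --- is never proved. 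You verify the chord bound only at the single point $j'=a-k+2$, correctly observe that the naive induction on $k$ fails because $\frac{k-2}{k-1}<\frac{k-1}{k}$, and then describe a repair (iterate the level reduction while keeping the original chord, descend to uniformity $3$, evaluate explicitly) without carrying it out. That repair is itself nontrivial: the reduction $\ell_{a+1,k}(j')=\ell_{a,k-1}(j')$ holds only for $j'<\binom{a}{k-1}$, so each level of the iteration leaves behind an interval such as $\binom{a-1}{k-2}\le j'<\binom{a}{k-1}$ on which the original slope $\binom{a+1}{k-1}/d$ must be checked against a Sperner-type bound at the lower uniformity; none of these checks is done, and the closing sentence that $\ell$ ``accumulates slowly enough that the chord bound survives'' is precisely the assertion to be proved, not an argument for it.

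For contrast, the paper avoids any induction on $k$ by cutting at $d_0=\binom{a}{k}+\binom{a-1}{k-1}+\dots+\binom{a-k+2}{2}=\binom{a+1}{k}-(a-k+2)$ rather than at $\binom{a}{k}$: for $m>d_0$ the representation of $m$ differs from that of $d$ only in the last term $\binom{\cdot}{1}$ and the inequality is a one-line mediant computation; the case $m=d_0$ is an explicit estimate; and for $m<d_0$ one compares the $k$-binomial representation of $m$ termwise with that of $d_0$ using $a_i\le a-k+i$ plus one application of Lemma~\ref{lem:sperner} to the bottom two terms. To salvage your route you would need to establish the chord bound for every $j'$ with $a-k+2\le j'<\binom{a}{k-1}$, not only at the left endpoint of that interval.
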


\begin{proof}
	If $d = \binom{a+1}{k}$, the statement is just Lemma~\ref{lem:sperner}. Otherwise, the $k$-binomial representation of $d$ is $d = \binom{a}{k} + \binom{a-1}{k-1} + \ldots + \binom{a-k+2}{2} + \binom{l}{1}$ for some $\frac{a-k+2}{k} \leq l < a-k+2$, hence $F_k(d) = \binom{a}{k-1} + \binom{a-1}{k-2} + \ldots + \binom{a-k+2}{1} + \binom{l}{0} = \binom{a+1}{k-1}$.
	
	Let $d_0 = \binom{a}{k} + \binom{a-1}{k-1} + \ldots + \binom{a-k+2}{2}$. We prove the inequality $\frac{F_k(m)}{m} \geq \frac{F_k(d)}{d}$ in three cases $m > d_0$, $m = d_0$ and $m < d_0$ separately.
	
	If $m > d_0$, then $m = \binom{a}{k} + \binom{a-1}{k-1} + \ldots + \binom{a-k+2}{2} + \binom{j}{1}$ for some $j \leq d$, hence
	
	\begin{align*}
	\frac{F_k(m)}{m} &=
	\frac{\binom{a}{k-1} + \ldots + \binom{a-k+2}{1} + \binom{j}{0}}{\binom{a}{k} + \ldots + \binom{a-k+2}{2} + \binom{j}{1}} =
	\frac{F_k(d_0) + 1}{d_0 + j} \geq \\
	\geq \frac{F_k(d_0) + 1}{d_0 + l} &= 
	\frac{\binom{a}{k-1} + \ldots + \binom{a-k+2}{1} + \binom{j}{0}}{\binom{a}{k} + \ldots + \binom{a-k+2}{2} + \binom{l}{1}} = 
	\frac{F_k(d)}{d}.
	\end{align*}
	
	If $m = d_0$, then
	\begin{align*}
	\frac{F_k(m)}{m} =
	\frac{\binom{a}{k-1} + \ldots + \binom{a-k+2}{1}}{\binom{a}{k} + \ldots + \binom{a-k+2}{2}}&=
	\frac{\binom{a+1}{k-1} - 1}{\binom{a+1}{k} - (a-k+2)} > \\
	> \frac{\binom{a+1}{k-1} - 1}{\frac{a-k+2}{k}\binom{a+1}{k-1} - \frac{a-k+2}{k}} &=
	\frac{k}{a-k+2} \geq
	\frac{1}{l},
	\end{align*}
	therefor
	\[
	\frac{F_k(m)}{m} =
	\frac{\binom{a}{k-1} + \ldots + \binom{a-k+2}{1}}{\binom{a}{k} + \ldots + \binom{a-k+2}{2}} >
	\frac{\binom{a}{k-1} + \ldots + \binom{a-k+2}{1} + \binom{l}{0}}{\binom{a}{k} + \ldots + \binom{a-k+2}{2} + \binom{l}{1}} =
	\frac{F_k(d)}{d}.
	\]
	
	If $m < d_0$, then let the $k$-binomial representation of $m$ be $m = \binom{a_k}{k} + \ldots + \binom{a_1}{1}$. For every $0 \leq i \leq k$ the inequality $a_i \leq a - k + i$ holds, so for every $0 \leq i \leq k-1$
	\begin{equation}\label{eq:aibecsles}
	\frac{\binom{a_i}{i-1}}{\binom{a_i}{i}} =
	\frac{i}{a_i - i + 1} \geq
	\frac{i}{a - k + 1} =
	\frac{\binom{a-k + i}{i-1}}{\binom{a-k+i}{i}},
	\end{equation}
	furthermore, according to Lemma~\ref{lem:sperner}, the inequality $\binom{a_2}{2} + \binom{a_1}{1} \leq \binom{a-k+2}{2}$ implies
	\begin{equation}\label{eq:a2becsles}
	\frac{\binom{a_2}{1} + \binom{a_1}{0}}{\binom{a_2}{2} + \binom{a_1}{1}} \geq \frac{\binom{a-k+2}{1}}{\binom{a-k+2}{2}}.
	\end{equation}
	Putting (\ref{eq:aibecsles}), (\ref{eq:a2becsles}) and the $m = d_0$ case together we get
	\begin{align*}
	\frac{F_k(m)}{m} = \frac{\binom{a_k}{k-1} + \ldots + \binom{a_1}{0}}{\binom{a_k}{k} + \ldots + \binom{a_1}{1}} \geq \frac{\binom{a}{k-1} + \ldots + \binom{a-k+2}{1}}{\binom{a}{k} + \ldots + \binom{a-k+2}{2}} =
	\frac{F_k(d_0)}{d_0} \geq
	\frac{F_k(d)}{d},
	\end{align*}
	finishing the proof of Lemma~\ref{lem:shortintervall}.
\end{proof}

\section{Shadow ratio of hypergraph with higher maximum degree}\label{sec:largedegree}

We prove Theorem~\ref{thm:lovaszdegree}, \ref{thm:longintervall} and \ref{thm:shortintervall} by connecting the case of bounded degree hypergraphs to the case of bounded size hypergraphs.

If $v$ is a vertex of $\Hc$, then let $\Hc_v = \{H \in \Hc: v \in H\}$ and $\Hc_{-v} = \{H \setminus \{v\}: H \in \Hc_v\}$. If $\Hc$ is $k$-uniform, $H_{-v}$ is a $(k-1)$-uniform hypergraph, and its shadow ratio is closely related to the shadow ratio of $\Hc$, as stated in the following

\begin{lemma}\label{lem:key}
	If there exists an $\alpha \in \R$ such that for a $k$-uniform hypergraph $\Hc$ the inequality
	\[
	\frac{|\sigma(\Hc_{-v})|}{|\Hc_{-v}|} \geq \alpha
	\]
	holds for every vertex $v$ of $\Hc$, then
	\[
	\sm \geq \frac{k\alpha}{k-1}.
	\]
\end{lemma}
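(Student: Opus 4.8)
The plan is to use a double-counting argument on vertex–edge incidences, combined with a single structural identity relating the shadow of the link $\Hc_{-v}$ to the edges of $\sigma(\Hc)$ passing through $v$.

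First I would establish the identity
\[
\bigl|\{T \in \sigma(\Hc) : v \in T\}\bigr| = |\sigma(\Hc_{-v})|
\]
for every vertex $v$. The correspondence is $T \mapsto T \setminus \{v\}$: if $T$ is a $(k-1)$-element set of the shadow containing $v$, then $T \subset H$ for some $H \in \Hc$, necessarily with $v \in H$, so $H \in \Hc_v$ and $T \setminus \{v\} \subset H \setminus \{v\} \in \Hc_{-v}$, whence $T \setminus \{v\} \in \sigma(\Hc_{-v})$. Conversely, any $S \in \sigma(\Hc_{-v})$ lies in some $H \setminus \{v\}$ with $H \in \Hc_v$, so $S \cup \{v\}$ is a $(k-1)$-element set of $\sigma(\Hc)$ through $v$. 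This map is a bijection, giving the claimed equality. I would also record the elementary fact that $|\Hc_{-v}| = |\Hc_v| = \deg_\Hc(v)$.

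Next I would write the two standard incidence counts. Summing the degrees of the vertices of $\Hc$ counts each hyperedge $k$ times, so $\sum_v \deg_\Hc(v) = k|\Hc|$; summing over vertices the number of shadow edges through each vertex counts each element of the $(k-1)$-uniform $\sigma(\Hc)$ exactly $k-1$ times, so $\sum_v \bigl|\{T \in \sigma(\Hc): v \in T\}\bigr| = (k-1)|\sigma(\Hc)|$. By the identity above, the latter sum equals $\sum_v |\sigma(\Hc_{-v})|$. Isolated vertices contribute zero to both counts, so the sums may be restricted to the vertices actually appearing in $\Hc$, where $|\Hc_{-v}| > 0$ and the hypothesis on $\Hc_{-v}$ is meaningfully applicable.

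Finally I would combine these. Applying the hypothesis $|\sigma(\Hc_{-v})| \geq \alpha|\Hc_{-v}|$ termwise and summing over all vertices of $\Hc$ gives
\[
(k-1)|\sigma(\Hc)| = \sum_v |\sigma(\Hc_{-v})| \geq \alpha \sum_v |\Hc_{-v}| = \alpha\, k\,|\Hc|,
\]
and dividing by $(k-1)|\Hc|$ yields $\sm \geq \frac{k\alpha}{k-1}$. There is no genuinely hard step here; the only points requiring care are the bijection in the first step—in particular the observation that a $(k-1)$-edge of the shadow through $v$ forces $v$ into the witnessing hyperedge—and the bookkeeping convention for degree-zero vertices, so that the termwise hypothesis is never applied to an empty link.
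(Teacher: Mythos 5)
Your proof is correct and follows essentially the same route as the paper: the same double count of vertex--edge and vertex--shadow-edge incidences, together with the identity $|\sigma(\Hc)_v| = |\sigma(\Hc_{-v})|$. The only cosmetic difference is that you apply the hypothesis termwise before summing, whereas the paper bounds the ratio of sums by the minimum of the individual ratios; these are interchangeable.
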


\begin{proof}[Proof of Lemma~\ref{lem:key}]
	By double counting the vertices of $\Hc$ and the hyperedges containing them, we get $k|\Hc| = \sum_v |\Hc_v|$. The same argument for the shadow shows $(k-1)|\sigma(\Hc)| = \sum_v|\sigma(\Hc)_v|$. Here $\sigma(\Hc)_v$ is the set of those elements of the shadow, which contain $v$. The equality $|\Hc_{-v}| = |\Hc_v|$ holds for every $v$, as the bijection $H \mapsto H \cup \{v\}$ shows. Similarly, we can get $|\sigma(\Hc_{-v})| = |\sigma(\Hc)_v|$ for every $v$. The following line of inequalities prove the lemma:
	\[
	\frac{(k-1)|\sigma(\Hc)|}{k|\Hc|} =
	\frac{\sum_v|\sigma(\Hc)_v|}{\sum_v|\Hc_v|} \geq
	\min_v\frac{|\sigma(\Hc)_v|}{|\Hc_v|} =
	\min_v\frac{|\sigma(\Hc_{-v})|}{|\Hc_{-v}|} \geq
	\alpha.
	\]
\end{proof}

Note that $|\Hc_{-v}| = |\Hc_v| \leq \Delta(\Hc)$. We can combine Lemma~\ref{lem:key} with the results about the shadow ratio of bounded size hypergraphs from Section~\ref{sec:sizebound} to get the theorems about bounded degree hypergraphs.

\begin{proof}[Proof of Theorem~\ref{thm:lovaszdegree}]
	According to the degree bound, for every vertex $v$ of $\Hc$ the inequality
	\[
	|\Hc_{-v}| \leq \binom{x}{k-1}
	\]
	holds.
	
	We can apply Corollary~\ref{cor:lovaszshadowratio}, which guarantees
	\[
	\frac{|\sigma(\Hc_{-v})|}{|\Hc_{-v}|} \geq \frac{k-1}{x-k+2}.
	\]
	From that, Lemma~\ref{lem:key} implies
	\[
	\sm \geq \frac{k}{x-k+2}.
	\]
\end{proof}

\begin{proof}[Proof of Theorem~\ref{thm:longintervall}]
	According to the degree bound, for every vertex $v$ of $\Hc$ the inequality
	\[
	|\Hc_{-v}| \leq \binom{t}{k-1} + \binom{t -\left\lceil \frac{(t+1)}{k-1}\right\rceil}{k-2} + \binom{t -\left\lceil \frac{2(t+1)}{k-1}\right\rceil}{k-3} +
	\ldots +
	\binom{t - \left\lceil \frac{(k-2)(t+1)}{k-1}\right\rceil}{1}
	\]
	holds.
	
	We can apply Lemma~\ref{lem:longintervall}, which guarantees
	\[
	\frac{|\sigma(\Hc_{-v})|}{|\Hc_{-v}|} \geq \frac{k-1}{t-k+2}.
	\]
	From that, Lemma~\ref{lem:key} implies
	\[
	\sm \geq \frac{k}{t-k+2}.
	\]
\end{proof}

\begin{proof}[Proof of Theorem~\ref{thm:shortintervall}]
	There is a $d$ such that $\binom{t+1}{k-1} - \frac{k-2}{k-1}(t-k+3) \leq d < \binom{t+1}{k-1}$ and for every vertex $v$ of $\Hc$ the inequality
	\[|\Hc_{-v}| \leq d\]
	holds.
	
	We can apply Lemma~\ref{lem:shortintervall}, which guarantees
	\[
	\frac{|\sigma(\Hc_{-v})|}{|\Hc_{-v}|} \geq \frac{\binom{t+1}{k-2}}{d}.
	\]
	
	From that, Lemma~\ref{lem:key} implies
	\[
	\sm \geq \frac{k\binom{t+1}{k-2}}{(k-1)d}.
	\]
\end{proof}

\section{Shadow of hypergraphs with very low maximum  degree}\label{sec:smalldegree}

\begin{proof}[Proof of Theorem~\ref{thm:lowdegree}]
	First, we show, that if a $k$-uniform hypergraph $\Hc$ has degree bound $d < k$, then its hyperedges can be partitioned into subhypergraphs of size at most $d$ with the subhypergraphs having disjoint shadows.
	
	\begin{definition}
		The subhypergraph $\Hc'$ of a $k$-uniform hypergraph $\Hc$ is connected if for every $H, F \in \Hc'$ there is a sequence of hyperedges $H = H_1, H_2, \ldots, H_t = F$ of $\Hc'$ such that for every $1 \leq i \leq t-1$, $|H_i \cap H_{i+1}| = k-1$.
		
		The maximal connected subhypergraphs of $\Hc$ are called the connected components of $\Hc$.
	\end{definition}
	
	If $\Hc'$ and $\Hc''$ are two different connected components of $\Hc$, then their shadows are disjoint. First, we consider only connected hypergraphs. Their size is at most $d$, as stated in the following
	
	\begin{lemma}
		If $d < k$ and $\Hc$ is a connected $k$-uniform hypergraph with degree bound $d$, then $|\Hc| \leq d$.
	\end{lemma}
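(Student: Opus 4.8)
The plan is to prove the stronger statement that a connected $\Hc$ with maximum degree below $k$ has a vertex contained in \emph{every} hyperedge; such a vertex has degree exactly $|\Hc|$, so the degree bound $\Delta(\Hc) \le d$ then yields $|\Hc| \le d$ at once. To locate such a vertex I would track how the common intersection of the first few hyperedges shrinks as hyperedges are added one at a time along the connectivity structure.

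Concretely, I would fix a spanning tree of the graph whose vertices are the hyperedges and whose edges join pairs sharing $k-1$ vertices (this graph is connected by definition of a connected hypergraph), root it arbitrarily at some $H_1$, and order the hyperedges $H_1, \ldots, H_m$ so that each $H_i$ with $i \ge 2$ is tree-adjacent to some $H_{p(i)}$ with $p(i) < i$ (a parent-before-child ordering). A small but necessary point is that every prefix $\{H_1, \ldots, H_j\}$ is then connected, since each of its members reaches $H_1$ through parents that also lie in the prefix; this is what lets the induction below use "the parent is earlier." Writing $C_j = H_1 \cap \cdots \cap H_j$, I would prove by induction on $j$ that
\[
|C_j| \ge k - j + 1 .
\]
The base case $C_1 = H_1$ has size $k$. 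For the inductive step, the parent $H_p$ of $H_{j+1}$ satisfies $p \le j$, so $C_j \subseteq H_p$, while $T := H_{j+1} \cap H_p$ has size $k-1$ and also lies in $H_p$. Since $C_j$ and $T$ are both subsets of the $k$-element set $H_p$, inclusion–exclusion inside $H_p$ gives $|C_j \cap T| \ge |C_j| + |T| - k \ge (k-j+1) + (k-1) - k = k - j$, and because $T \subseteq H_{j+1}$ this intersection is contained in $C_{j+1}$, so $|C_{j+1}| \ge k - j = k - (j+1) + 1$.

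The conclusion then comes from applying the bound at the right index. Suppose for contradiction that $|\Hc| = m > d$. Since $d < k$ and both are integers, $d + 1 \le k$, so the prefix $H_1, \ldots, H_{d+1}$ exists and is connected, and the bound gives $|C_{d+1}| \ge k - (d+1) + 1 = k - d \ge 1$. Any vertex $v \in C_{d+1}$ lies in $d+1$ distinct hyperedges, hence $\deg(v) \ge d+1 > d$, contradicting the degree bound; therefore $m \le d$. I expect the main obstacle to be isolating the correct invariant: recognizing that connectivity alone forces the shared-vertex count of a prefix to decay by at most one per added hyperedge (the inclusion–exclusion step), and that one only needs to track the \emph{first} $d+1$ hyperedges rather than all of them, so that the interaction with the degree bound becomes immediate.
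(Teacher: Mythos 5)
Your proof is correct and follows essentially the same route as the paper: order the hyperedges so each one meets an earlier one in $k-1$ vertices, show by induction that the common intersection of the first $j$ hyperedges has size at least $k-j+1$, and derive a contradiction from a vertex common to $d+1$ hyperedges. Your version is just slightly more explicit about the spanning-tree ordering and phrases the one-element-loss step via inclusion--exclusion rather than the paper's direct observation that $|H_p \setminus H_{j+1}| = 1$.
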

	\begin{proof}
		Let $\Hc = \{H_1, \ldots, H_t\}$ and let the hyperedges be ordered in such a way, that for every $H_i$ there is an $H_j$ with $j<i$ and $|H_i \cap H_j| = k-1$. We prove the inequality
		\begin{equation}\label{eq:largecap}
		\forall i \in [t]: |\cap_{j=1}^{i}H_j| \geq k - i + 1
		\end{equation}

		by induction on $i$. For $i = 1$, $|H_1| = k$ is trivially true. Let us suppose that $|\cap_{j=1}^{i-1}H_j| \geq k-i+2$. $H_i$ has a neighbour $H_s$ among $H_1, \ldots, H_{i-1}$. Since there is exactly one element contained in $H_s$ and not contained in $H_i$, intersecting with $H_i$ can not decrease the size of the intersection with more than one element, and so $|\cap_{j=1}^{i}H_j| \geq k - i + 1$.
		
		Now suppose for contradiction that $t \geq d+1$ and use (\ref{eq:largecap}) to conclude, that $|\cap_{j=1}^{d+1}H_j| \geq k - d \geq 1$. Since $\cap_{j=1}^{d+1}H_j \neq \emptyset$, there is an element in $\Hc$ with degree more than $d$, a contradicton.
	\end{proof}
	
	Turning back to the proof of Theorem~\ref{thm:lowdegree}, first consider the case when $\Hc$ is a $k$-uniform connected hypergraph with degree bound $d < k$ and size $t \leq d$. We show, that $|\sigma(\Hc)| \geq kt - \binom{t}{2}$. Let $\Hc = \{H_1, \ldots, H_t\}$ and call the elements of $\sigma(\Hc)$ \textit{reduced edges}. $H_1$ contains $k$ reduced edges. $H_2$ contains at least $k-1$ reduced edges which are not contained in $H_1$. Similarly, for any $i$, $H_i$ contains at least $k-i+1$ reduced edges, which are not contained in $H_1, \ldots, H_{i-1}$. So $|\sigma(\Hc)| \geq \sum_{i = 1}^t k - i + 1 = kt - \binom{t}{2}$. Equality is achieved, if $\Hc$ has exactly $k+1$ vertices.
	
	Finally, if $\Hc$ is a $k$-uniform hypergraph with degree bound $d<k$ and arbitrary size $t$, then it is a disjoint union of subhypergraphs $\Hc_1, \ldots, \Hc_{q+1}$ each of size at most $d$ and with disjoint shadows. Note, that the vertex sets of the subhypergraphs may be intersecting, but their shadows are disjoint. Let the sizes of the subhypergraphs be $t_1, \ldots, t_{q+1}$. We claim that if there are $i \neq j$ with $0 < t_i \leq t_j < d$, then $\Hc$ is not extremal. Indeed, if this is the case, then $|\sigma(\Hc_i)| + |\sigma(\Hc_j)| \geq k(t_1 + t_2) - \binom{t_1}{2} - \binom{t_2}{2}$. But if we replace this two subhypergraphs with $\Hc'_i$ and $\Hc'_j$, subhypergraphs on separate $k+1$ vertices and size $t_i - 1$ and $t_j + 1$, then $|\sigma(\Hc'_i)| + |\sigma(\Hc'_j)| = k(t_1 + t_2) - \binom{t_1 - 1}{2} - \binom{t_2 + 1}{2} < k(t_1 + t_2) - \binom{t_1}{2} - \binom{t_2}{2}$.
	
	As a conclusion, extremal $k$-uniform hypergraphs with degree bound $d<k$ and size $qd + r$ are those, which can be decomposed into $q+1$ connected hypergraphs, each on $k+1$ vertices and $q$ of them having size $d$, one of them size $r$.
	
\end{proof}

\section{Nonregular initial segments with low shadow ratio}\label{sec:example}

The Katona-Kruskal Shadow Theorem can be stated in the following way: a $k$-uniform hypergraph with $m$ hyperedges has shadow size at least that of the size of the shadow of the $k$-uniform \textit{initial segment} of size $m$. Initial segments are in a way a generalization of cliques and can be defined with the help of colexicographic ordering, where if given two sets $A, B \subset [n]$ we say that $A > B$ if and only if the largest element of the symmetric difference of $A$ and $B$ are in $A$. A $k$-uniform \textit{initial segment} of size $m$ is then just the first $m$ subsets of size $k$ in the colexicographic ordering. These constructions are easier to describe when the $k$-binomial representation of $m$ contains only a few terms, but all the initial segments have a pretty low shadow ratio and some of them might even be extremal for a given degree bound. The computation and comparison of the shadow ratios of initial segments are somewhat obscure, so we only do it for one more class of initial segments, to prove Proposition~\ref{prop:example}.

\begin{proof}[Proof of Proposition~\ref{prop:example}]
	We need a hypergraph with degree bound $\binom{t}{k-1} + \binom{t+1 - \left\lceil \frac{t+2}{k} \right\rceil}{k-2}$ and lower shadow ratio than the shadow ratio of $\binom{[t+1]}{k}$. The initial segment of size $\binom{t+1}{k} + \binom{t+2-\left\lceil\frac{t+2}{k}\right\rceil}{k-1}$ is such a hypergraph, as will be shown in the following. Note that this hypergraph is just $\Hc = \binom{[t+1]}{k} \cup \left \{E \cup \{t+2\}: E \in \binom{\left [t + 2 - \left\lceil \frac{t+2}{k} \right\rceil\right ]}{k-1}\right \}$.
	
	The vertices of $\Hc$ can be categorised into three classes according to their degree. Firstly, the degree of $t+2$ is $\binom{t + 1 - \left\lceil \frac{t+2}{k} \right\rceil}{k-1}$. Secondly, if $t + 2 - \left\lceil \frac{t+2}{k} \right\rceil < v < t+2$, then the degree fo $v$ is $\binom{t}{k-1}$. Thirdly, if $v \leq t + 2 - \left\lceil \frac{t+2}{k} \right\rceil$, then the degree of $v$ is  $ \binom{t}{k-1} + \binom{t+1 - \left\lceil \frac{t+2}{k} \right\rceil}{k-2}$. Hence, all the degrees are at most $\binom{t}{k-1} + \binom{t+1 - \left\lceil \frac{t+2}{k} \right\rceil}{k-2}$.
	
	What will be the size of $\sigma(\Hc)$? There are $\binom{t+2-\left\lceil\frac{t+2}{k}\right\rceil}{k-2}$ elements of $\sigma(\Hc)$ containing the last vertex, and $\binom{t+1}{k-1}$ elements of $\sigma(\Hc)$ not conaining it, so $|\sigma(\Hc)| = \binom{t+1}{k-1} + \binom{t+2-\left\lceil\frac{t+2}{k}\right\rceil}{k-2}$.
	
	The shadow ratio of $\Hc$ can be expressed as
	
	\begin{equation*}
	\sm =
	\frac{\binom{t+1}{k-1} + \binom{t + 2 - \left\lceil \frac{t+2}{k} \right\rceil}{k-2}}{\binom{t+1}{k} + \binom{t + 2 - \left\lceil \frac{t+2}{k} \right\rceil}{k-1}} = 
	\lambda\frac{\binom{t+1}{k-1}}{\binom{t+1}{k}} + (1-\lambda)\frac{\binom{t + 2 - \left\lceil \frac{t+2}{k} \right\rceil}{k-2}}{\binom{t + 2- \left\lceil \frac{t+2}{k} \right\rceil}{k-1}},
	\end{equation*}
	
	where $\lambda \in (0,1)$, hence it is a convex combination. The first term is exactly $\frac{k}{t-k+2}$, but for the second term
	\[
	\frac{\binom{t + 2 - \left\lceil \frac{t+2}{k} \right\rceil}{k-2}}{\binom{t + 2 - \left\lceil \frac{t+2}{k} \right\rceil}{k-1}} = 
	\frac{k-1}{t+4 - k - \left\lceil \frac{t+2}{k} \right\rceil} <
	\frac{k-1}{t+3 - k - \frac{t+2}{k}} =
	\frac{k}{t-k+2}
	\]
	holds, therefore
	\[
	\sm < \frac{k}{t-k+2},\]
	proving Proposition~\ref{prop:example}.
\end{proof} 

\section{Concluding remarks}\label{sec:conclusion}

We have determined a sharp lower bound on the shadow ratio of $k$-uniform hypergraphs with degree bound $d$ if $\binom{t}{k-1} \leq d \leq \binom{t}{k-1}
+ \binom{t -\left\lceil \frac{(t+1)}{k-1}\right\rceil}{k-2}
+ \ldots 
+ \binom{t - \left\lceil \frac{(k-2)(t+1)}{k-1}\right\rceil}{1}$ for some $t$ and if $\binom{t+1}{k-1} 
- \frac{k-2}{k-1}(t-k+3) 
\leq d 
< \binom{t+1}{k-1}$ and $k|t+1$ for some $t$. An obvious open question is to determine a sharp lower bound on the shadow ratio for other values of $d$, most importantly if $\binom{t}{k-1}
+ \binom{t -\left\lceil \frac{(t+1)}{k-1}\right\rceil}{k-2}
+ \ldots 
+ \binom{t - \left\lceil \frac{(k-2)(t+1)}{k-1}\right\rceil}{1} < d < \binom{t+2}{k-1} 
- \frac{k-2}{k-1}(t-k+4)$. All the extremal hypergraphs we have seen so far could have been obtained by deleting some (sometimes zero) disjoint maximal matchings from an initial segment.

\begin{question}
	Is it true, that for any $k$ and $d$, among $k$-uniform hypergraphs with degree bound $d$, there is an extremal hypergraph with minimal shadow ratio which can be obtained by deleting disjoint maximal matchings from an initial segment?
\end{question}

In all the solved cases, the extremal hypergraph were regular, because Lemma~\ref{lem:key} can not guarantee extremality if the hypergraph is nonhomogene in term of the shadow ratios of the local subhypergraphs around the vertices.

A harder question is: how can we bound the size of the shadow as a function of $k$, $d$ and $|\Hc|$? Can we find functions $F_k^d$ for which $|\sigma(\Hc)| \geq F_k^d(|\Hc|)$ holds sharply, that is the inequality holds for any $k$ uniform hypergraph with degree bound $d$, and for any size $m$ there is a hypergraph $\Hc$ with $|\Hc| = m$ and $|\sigma(\Hc)| = F_k^d(m)$? We have seen in Theorem~\ref{thm:lowdegree}, that if $d<k$ and $m = qd + r$ with $r < d$ and $q$ integers, then $F_k^d(m) = \left(qd + r\right)k - q\binom{d}{2} - \binom{r}{2}$. What can we say about the case $k \leq d$?

\begin{question}
	Determine functions $F_k^d$ for $k\leq d$ such that $|\sigma(\Hc)| \geq F_k^d(|\Hc|)$ sharply holds for $k$-uniform hypergraphs with degree bound $d$.
\end{question}

If $d$ is large enough compared to $k$ and $m$, then $F_k^d(m)$ equals to $F_k(m)$, the shadow function from the Kruskal-Katona Theorem. Hence an answer to this question would generalize Kruskal-Katona.

\invisiblesection{Acknowledgement}
\subsection*{Acknowledgement}

The author would like to thank Gyula O. H. Katona for providing the problem and for the many valuable discussions and help during the research.

\bibliographystyle{alpha}
\bibliography{biblio_shadow}

\end{document}